\newtheorem{theoreme}{Theorem}[section]
\newtheorem{proposition}{Proposition}[section]
\newtheorem{definition}{Definition}[section]
\newtheorem{corollaire}{Corollaire}[section]
\theoremstyle{remark}
\newtheorem{lemme}{Lemma}[section]
\def\A{\mathcal A}
\date{\today}
\begin{document}

\begin{center}

\textbf{Common dynamics of two Pisot substitutions with the same incidence matrix
} \vspace{0.5cm}

Tarek Sellami\footnote{sellami@iml.univ-mrs.fr\\ Institut de Math\'ematiques de
Luminy, CNRS U.M.R. 6206, 163, Avenue de Luminy, Case 907, 13288 Marseille
Cedex 09 France. \\D\'epartement de Math\'ematiques, Facult\'e des Sciences de
Sfax, BP 802, 3018 Sfax, Tunisie.}

\end{center}

\vspace{0.5cm} \textbf{Abstract}. The matrix of a substitution is not
sufficient to completely determine the dynamics associated, even in
simplest cases since there are many words with the same abelianization.\\
In this paper we study the common points of the canonical broken lines
associated to two different Pisot irreducible substitutions $\sigma_1$ and
$\sigma_2$ having the same incidence matrix. We prove that if $0$ is inner
point to the Rauzy fractal associated to $\sigma_1$  these common points can be generated with a substitution on an alphabet
of so-called "balanced blocks".

\vspace{1cm} \textbf{R\'esum\'e}: On sait que la matrice d'une substitution ne
suffit pas \`a d\'eterminer compl\`etement le syst\`eme dynamique associ\'e;
m\^eme dans les cas les plus simples, il existe de nombreuses substitutions
associ\'ees \`a une matrice : il existe de nombreux mots ayant le m\^eme
ab\'elianis\'e.\\ Dans ce papier, on  \'etudie les points commun de deux lignes
bris\'ees associ\'e aux deux substitutions $\sigma_1$ et $\sigma_2$ irreducible
de type Pisot qui ont la m\^eme matrice d'incidence. On montre que si $0$ est
un point int\'erieur \`a l'un des deux fractals de Rauzy associ\'e \`a
$\sigma_1$ ou $\sigma_2$ alors ces points communs peut \^etre g\'en\'er\'es par
une substitution d\'efinit sur un alphabet appel\'e "block balnc\'e". \vspace{1cm}
\section{Introduction}

Let $\sigma_1$ and $\sigma_2$ be two different Pisot substitutions having the
same incidence matrix. Although  the fixed points of each substitution have the
same letter frequencies, they usually show different dynamical and geometrical
properties, e.g., their Rauzy fractals have different properties. (The Rauzy
fractals can give a geometric model of the dynamical system defined by the
substitution, for more detail see section $2$).

\vspace{0.3cm}
A classic example is given by  the Tribonacci
substitution and the flipped Tribonacci substitution, i.e.,

\begin{center}
$\sigma_1:\left\{
\begin{array}{ll}
a\rightarrow ab\\
b\rightarrow ac\\
c\rightarrow a
\end{array}\right.$
\hspace{1cm}and \hspace{1cm} $\sigma_2:\left\{
\begin{array}{ll}
a\rightarrow ab\\
b\rightarrow ca\\
c\rightarrow a
\end{array}\right.$
\end{center}
\vspace{1cm} The incidence matrix of $\sigma_1$ and $\sigma_2$ is
$\begin{pmatrix}
1 & 1 & 1 \\
1 & 0 &0 \\
0 & 1 & 0 \\
\end{pmatrix}
$. The dominant eigenvalue satisfies the relation $X^3-X^2-X-1=0$, hence the
name Tribonacci for the substitution.

 The Rauzy fractal of the first substitution is a topological disc \cite{PA},
simply connected , while it is a well known fact that the second fractal is not
simply connected, compare Figure[1]. \vspace{0.5cm}
\begin{figure}[h]
\begin{center}
\label{compare_fractals}
\scalebox{0.4}{\includegraphics{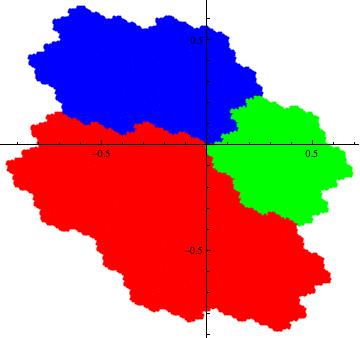}}
\vspace{0.5cm}
\scalebox{0.4}{\includegraphics{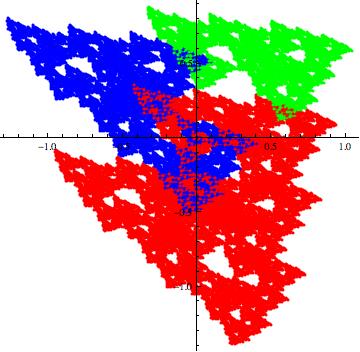}}
\caption{ The Rauzy  fractals of  $\sigma_1$ and $\sigma_2$}
\end{center}
\end{figure}

\vspace{0.5cm}

We consider another simple example of substitutions $\tau_1$ and $\tau_2$,
i.e.,

\begin{center}
$\tau_1:\left\{
\begin{array}{ll}
a\rightarrow aba\\
b\rightarrow ab\\
\end{array}\right.$
\hspace{1cm} and\hspace{1cm} $\tau_2:\left\{
\begin{array}{ll}
a\rightarrow aab\\
b\rightarrow ba\\
\end{array}\right.$
\end{center}

The Rauzy fractal of $\tau_2$ is the closure of a countable union of disjoint
intervals and the Rauzy fractal of $\tau_1$ is an interval, see \cite{AM} and Figure[6].\\
We can deduce from one matrix we can obtain many different substitutions, so
many different Rauzy fractals. We are interested to studies commons dynamics of
these Rauzy fractals, we are  interested to characterize their intersection, for this we need to define a new object.
prove that we can consider their intersection as a substitutive set.

\begin{definition}
A substitutive set is the closure of the projection of a canonical stepped line associated to a primitive substitution on a contracting space associated to the restriction of a positive integer matrix. For more detail see section 2.
\end{definition}

The main result of this paper is the following:
\begin{theoreme}

Let $\sigma_1$ and $\sigma_2$ be two irreducible unimodular Pisot substitutions
with the same incidence matrix. Let $X_{\sigma_1}$ and $X_{\sigma_2}$ the  two
associated  Rauzy fractals;
suppose that $0$  is inner point to $X_{\sigma_1}$  .\\
Then the intersection  of $X_{\sigma_1}$ and $X_{\sigma_2}$ has non-empty
interior, and it is substitutive.There is an  algorithm to obtain the substitution for intersection.
\end{theoreme}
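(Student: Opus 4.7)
The plan is to code common points of $X_{\sigma_1}$ and $X_{\sigma_2}$ by pairs of prefixes of the two fixed points whose abelianizations agree, to segment both fixed points at such \emph{balanced positions} into minimal pieces called \emph{balanced blocks}, and to show that $\sigma_1,\sigma_2$ applied component-wise induce a primitive substitution $\Sigma$ on this finite alphabet, whose canonical broken line projects by the contracting map onto $X_{\sigma_1}\cap X_{\sigma_2}$.

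I first dispose of the non-empty interior claim. Both Rauzy fractals contain $0$ (the image of the empty prefix) and, since the substitutions are Pisot, equal the closure of their interior. From $B(0,\varepsilon)\subset X_{\sigma_1}$ and the existence of an interior point of $X_{\sigma_2}$ lying in $B(0,\varepsilon)$, a small ball around such a point is contained in both fractals. For the substitutive structure, write $u_i=\sigma_i^{\infty}(a)$, let $\ell$ denote abelianization, and let $\pi$ denote projection onto the contracting subspace of the incidence matrix $M$. Every point of the two broken lines that lies in $X_{\sigma_1}\cap X_{\sigma_2}$ is the $\pi$-image of a pair of prefixes $(p_1,p_2)$ of $(u_1,u_2)$ with $\pi\ell(p_1)=\pi\ell(p_2)$; because $M$ is unimodular Pisot and irreducible, the kernel of $\pi$ contains no nonzero integer vector, so this forces $\ell(p_1)=\ell(p_2)$. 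Cutting both fixed points at every such balanced position yields synchronous factorisations $u_i=B_i^{(1)}B_i^{(2)}\cdots$ with $\ell(B_1^{(k)})=\ell(B_2^{(k)})$, and restricting to minimal blocks (no strictly interior balanced position) defines the candidate alphabet $\mathcal{B}$. Given $b=(w_1,w_2)\in\mathcal{B}$, the pair $(\sigma_1(w_1),\sigma_2(w_2))$ is again balanced because $\ell\circ\sigma_i=M\circ\ell$, so splitting it into minimal balanced sub-blocks defines $\Sigma(b)$. The incidence matrix of $\Sigma$ is then the restriction of a non-negative integer matrix, primitivity descends from primitivity of $\sigma_1$, and after taking closures the $\pi$-image of the stepped line of $\Sigma$ equals $X_{\sigma_1}\cap X_{\sigma_2}$, matching the definition of a substitutive set.

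The main obstacle, and essentially the only non-routine step, is proving that $\mathcal{B}$ is finite; everything else is bookkeeping around $\ell\circ\sigma_i=M\circ\ell$. The intended argument bounds $\ell(B_1^{(k)})$ in two complementary ways: its contracting component $\pi\ell(B_1^{(k)})$ is bounded by compactness of the Rauzy fractals, while its expanding component is bounded via the bounded-balance property of primitive Pisot substitutions. Together these confine $\ell(B_1^{(k)})$ to a finite subset of $\mathbb{Z}^{|\A|}$, after which a factor-complexity bound caps the number of pairs $(w_1,w_2)$ realising any given abelianization. The inner-point hypothesis on $0$ re-enters here to guarantee that balanced positions recur with bounded gaps in both broken lines, preventing block lengths from blowing up. Once finiteness of $\mathcal{B}$ is established, the algorithm is immediate: start from the initial balanced block $(a,a)$, iterate the rule $b\mapsto\Sigma(b)$, and record newly produced blocks until no new one appears; the finiteness proof guarantees that this enumeration terminates.
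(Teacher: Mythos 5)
Your overall architecture coincides with the paper's: code coincidences of the two broken lines by pairs of prefixes with equal abelianization (using irreducibility to pass from $\pi_s\ell(p_1)=\pi_s\ell(p_2)$ to $\ell(p_1)=\ell(p_2)$, which is the paper's Proposition 3.1), cut both fixed points at these balanced positions into minimal balanced blocks, let $\sigma_1,\sigma_2$ act componentwise to induce a substitution on the resulting alphabet, and prove non-empty interior exactly as in the paper's first lemma. You also correctly isolate the only non-routine point, namely finiteness of the block alphabet $\mathcal{B}$. But that is precisely where your proposal has a genuine gap: you assert that ``the inner-point hypothesis guarantees that balanced positions recur with bounded gaps'' without supplying any mechanism, and the two bounds you do offer cannot replace it. The contracting component of $\ell(B^{(k)})$ is indeed bounded by compactness, but the bounded-balance property of Pisot substitutions controls that same contracting component, not the expanding one; the expanding component of $\ell(B^{(k)})$ is essentially $|B^{(k)}|$ times the Perron direction, so bounding it is equivalent to bounding the block lengths, i.e.\ the gaps between balanced positions --- the very thing to be proved. (Once lengths are bounded, the factor-complexity step is superfluous anyway.)

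The missing mechanism, which is the actual content of the inner-point hypothesis and occupies two lemmas of the paper, is a tiling argument. If $0$ is interior to $X_{\sigma_1}$, then $X_{\sigma_1}$ is a fundamental domain of $E_s$ for the projection of the lattice $\Gamma=\{\sum n_ie_i : \sum n_i=0\}$. Take a small open set $W$ inside the interior of $X_{\sigma_1}\cap X_{\sigma_2}$ containing the projection of a balanced position. At any return time $n$ of the $\Omega_{\sigma_2}$-orbit to $\pi_2^{-1}(W)$, the two projected prefix points $\pi_s\ell(u_1[0,n))$ and $\pi_s\ell(u_2[0,n))$ differ a priori by $\pi_s(w)$ for some $w\in\Gamma$; since one of them lies in the interior of the fundamental domain and the other lies in $X_{\sigma_1}$, necessarily $\pi_s(w)=0$, hence $w=0$ and $n$ is itself a balanced position. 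Uniform recurrence of the primitive substitutive system then gives bounded return times to $W$, hence bounded gaps between balanced positions, hence finitely many minimal balanced blocks. Without this step there is no reason a return of the second broken line near $0$ modulo $\Gamma$ is a genuine coincidence rather than a translate by a nonzero lattice vector, and the finiteness of $\mathcal{B}$ --- and with it the termination of your enumeration algorithm --- is unsupported.
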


\section{Substitutions and Rauzy fractals}

\subsection{General setting}

Let $\mathcal{A}:=\{a_1,...,a_d\}$ be a finite set of cardinal $d$ called
alphabet. The free monoid  $\mathcal{A}^*$ on the alphabet $\mathcal{A}$ with
empty word $\varepsilon$ is defined as the set of finite words on the alphabet
$\mathcal{A}$, this is $\mathcal{A}^*:=\bigcup_{k\in\mathbb{N}}\mathcal{A}^k$,
endowed with the concatenation map. We denote by $\mathcal{A}^{\mathbb{N}}$ and
$\mathcal{A}^{\mathbb{Z}}$ the set of one and two-sided sequences on
$\mathcal{A}$, respectively. The topology of $\mathcal{A}^{\mathbb{N}}$ and
$\mathcal{A}^{\mathbb{Z}}$ is the product topology of discrete topology on each
copy of $\mathcal{A}$. Both spaces are metrizable.

The length of a word $w \in \mathcal{A}^n$ with $n \in \mathbb{N}$ is defined
as $\vert w\vert = n$. For any letter $a\in\mathcal{A}$, we define the number
of occurrences of $a$ in $w = w_1w_2 \ldots w_{n-1}w_n$ by $\vert w\vert_a
=\sharp \{i  | w_i = a\}$.

Let $l : \mathcal{A}^* \mapsto \mathbb{Z}^d : w\mapsto(\vert w\vert
_a)_{a\in\mathcal{A}}\in\mathbb{N}^d$ be the natural homomorphism obtained by
abelianization of the free monoid, called the abelianization map.

A substitution over the alphabet $\mathcal{A}$ is an endomorphism of the free
monoid $\mathcal{A}^*$ such that the image of each letter of $\mathcal{A}$ is a
nonempty word.\\A substitution $\sigma$ is primitive if there exists an integer
$k$ such that, for each pair $(a, b)\in \mathcal{A}^2$, $\vert \sigma^k(a)
\vert_b > 0$. We will always suppose that the substitution is primitive, this
implies that for all letter $j \in \A$ the length of the successive iterations
$\sigma^k(j)$ tends to infinity.

A substitution naturally extends to the set of two sided sequences
$\mathcal{A}^{\mathbb{Z}}$. We associate to every substitution $\sigma$ its
incidence matrix $M$ which is the $n\times n $ matrix obtained by
abelianization, i.e. $M_{i,j} = \vert \sigma(j) \vert_i$. It holds that
$l(\sigma(w))=Ml(w)$ for all $w\in\mathcal{A}^*$.\\

\textbf{Remark.} The incidence matrix of a primitive substitution is a
primitive matrix, so with the Perron-Frobenius theorem, it has a simple real
positive dominant eigenvalue $\beta$.

\subsection{Rauzy fractals}
\begin{definition}
A Pisot number is an algebric integer $\beta>1$ such that each Galois
conjugate $\beta^{(i)}$ of $\beta$ satisfies $\mid\beta^{(i)}\mid<1$.

\end{definition}

From now, we will suppose that all the substitutions that we consider are
irreducible of Pisot type and unimodular. This mean that the characteristic
polynomial of its incidence matrix is irreducible, its determinant is equal to
$\pm1$ and its dominant eigenvalues is a Pisot number. We can prove that any
irreducible Pisot substitution is primitive (see  \cite{JML}).\\

\textbf{Remark.} Note that there exist substitution whose largest eigenvalue
is Pisot but whose incidence matrix has eigenvalues that are not conjugate to
the dominant eigenvalue. Example is 1$\rightarrow$ 12, 2$\rightarrow$ 3,
3$\rightarrow$ 4, 4$\rightarrow$ 5, 5$\rightarrow $1. The characteristic
polynomial is reducible. Such substitutions are called Pisot reducible.

\begin{definition}
Let $\sigma$ a substitution and $u\in \mathcal{A}^\mathbb{N}$, $u$ is a fixed
point of $\sigma$ if $\sigma(u)=u$. The infinite word $u$ is a periodic point
of $\sigma$ if there exist $k\in\mathbb{N}$ such that $\sigma^k(u)=u$.
\end{definition}
Let $\sigma$ be a primitive substitution, then there exist a finite number of
periodic points (see \cite{PF}). We associate to the fixed point $u$ of the
substitution a symbolic dynamical system $(\Omega_u, S)$
 where $S$ is the shift map on $\mathcal{A}^\mathbb{N}$ given by $S(a_0a_1...)=a_1a_2...$
 and $\Omega_u$ is the closure of $\{S^m(u): m\geq0\}$ in
 $\mathcal{A}^\mathbb{N}$.\\

\textbf{Remark.} If $\sigma$ is a primitive substitution then the symbolic
dynamical system $(\Omega_u, S)$ does not depend on $u$; we denote it by
$(\Omega_\sigma,S$).

We say that a dynamical system $(X, f)$ is semiconjugate to another dynamical
system $(Y, g)$ if there exists a continuous surjective map $\Theta :
X\rightarrow Y$ such that $\Theta\circ f=g\circ\Theta$. An important question
is whether and how the symbolic dynamical system $(\Omega_u, S)$ admit a
geometric model. By geometrically realizable we mean there exists a dynamical
system $(X, f)$ defined on a geometrical structure, such that $(\Omega_u, S)$
is semiconjugate to $(X, f)$.

In \cite{GR}, G.Rauzy proves that the dynamical system generated by the
substitution $\sigma(1)=12$, $\sigma(2)=13$, $\sigma(3)=1$, is
measure-theoretically conjugate to an exchange of domains in a compact set
$\mathcal{R}$ of the complex plane. This compact subset has a self-similar
structure : using method introduced by F.M.Dekking in \cite{FMD}, S.Ito and
Pierre Arnoux obtain in \cite{PA} an alternative construction of $\mathcal{R}$
and prove that each of exchanged domains has fractal boundary. We will use the
projection method to obtain the Rauzy fractal.

\begin{definition}
A stepped line $L=(x_n)$ in $\mathbb{R}^d$ is a sequence (it could be finite or infinite) of points in $\mathbb{R}^d$
such that $x_{n+1}-x_n$ belong to a finite set.\\
A canonical stepped line is a stepped line such that $x_0=0$ and for all $n\geq 0$, $x_{n+1}-x_n$ belong to
the canonical basis of $\mathbb{R}^d$.
\end{definition}
Using the abelianization map, to any finite or infinite word $W$, we can
associate a canonical stepped line in  $\mathbb{R}^d$ as a sequence $(l(P_n))$,
where $P_n$ are the prefix of length $n$ of $W$.

An interesting property of the canonical stepped line associated to a fixed point of primitive Pisot
substitution is that it remains within bounded
distance from the expanding direction (given by the right eigenvector of
Perron-Frobenius of $M_{\sigma}$).
We denote by $E_s$ the stable space (or contracting space) and $E_u$ the
unstable space (or expanding direction).
We denote by $\pi_s$ the linear projection in the contracting plane, parallel to the
expanding direction and $\pi_u$ the projection in the
expanding direction parallel to the contracting plane.
We will project the stepped line on the contracting space in the direction of the right Perron-Frobenius
eigenvector. We obtain a bounded set in $(d-1)$-dimensional vector space.\\

\begin{definition}

Let $\sigma$ an irreducible Pisot substitution. The Rauzy fractal associated to
$\sigma$ is the closure of the projection of the canonical stepped line
associated to any fixed point of $\sigma$ in the contracting plane parallel to
the expanding direction.
\end{definition}
We note the projection $\pi$ of the orbit of the fixed point associated to a Pisot irreducible substitution $\sigma$ on a contracting space associated to its incidence matrix.

\begin{proposition}
The projection $\pi$ of the symbolic dynamical system $\Omega_\sigma$
associated to a Pisot irreducible substitution $\sigma$ to the Rauzy fractal is
a continuous map.
\end{proposition}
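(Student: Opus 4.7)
The strategy is to define $\pi$ explicitly on the forward orbit of a fixed point $u$ of $\sigma$, establish uniform continuity there, and extend by density. Concretely, set $\pi(S^n u) := \pi_s(l(u_0 u_1 \cdots u_{n-1}))$, i.e.\ the $n$-th vertex of the canonical stepped line projected onto the contracting space $E_s$ parallel to $E_u$. This takes values in $X_\sigma$ by the very definition of the Rauzy fractal as the closure of such vertices. Since $\sigma$ is primitive, the orbit $\{S^n u : n \geq 0\}$ is dense in $\Omega_\sigma$; so any uniformly continuous map on this dense subset extends uniquely to a continuous map on the closure $\Omega_\sigma$, valued in the compact set $X_\sigma$.

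The crux is therefore uniform continuity on the orbit. Given $\varepsilon > 0$, I need $N$ such that whenever $S^n u$ and $S^m u$ share a prefix of length $N$, one has $\|\pi(S^n u) - \pi(S^m u)\| < \varepsilon$. For this I combine two ingredients. First, the recognisability of Pisot substitutions (Moss\'e's theorem): if the common prefix is long enough, the $\sigma^k$-desubstitutions of $S^n u$ and $S^m u$ also coincide at the start, so one can choose $p, q$ with $n = |\sigma^k(u_{[0,p)})|$ and $m = |\sigma^k(u_{[0,q)})|$ exactly, with no leftover boundary term. Second, since $u = \sigma^k(u)$, this forces $u_{[0,n)} = \sigma^k(u_{[0,p)})$ and $u_{[0,m)} = \sigma^k(u_{[0,q)})$; applying the identity $l \circ \sigma^k = M^k \circ l$ together with the commutation $\pi_s \circ M = M|_{E_s} \circ \pi_s$ (valid because $E_s$ is $M$-stable) yields
\begin{equation*}
\pi(S^n u) - \pi(S^m u) \;=\; \bigl(M|_{E_s}\bigr)^{k}\bigl(\pi(S^p u) - \pi(S^q u)\bigr).
\end{equation*}
The right-hand side has norm at most $\|(M|_{E_s})^k\| \cdot \operatorname{diam}(X_\sigma)$. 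By the Pisot hypothesis, every eigenvalue of $M|_{E_s}$ lies strictly inside the unit disk, hence $\|(M|_{E_s})^k\| \to 0$ geometrically; choosing $k$ large (which is legitimate for $N$ large, by recognisability with a controllable window size) yields the required bound.

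The main obstacle is the appeal to recognisability, a non-trivial structural property stating that the desubstitution of $\Omega_\sigma$ by $\sigma^{-1}$ is well-defined and continuous, so that long common prefixes lift to long common prefixes at the coarser scale. For primitive aperiodic Pisot substitutions this was proved by B. Moss\'e, and here I would invoke it as a black box; without it one would be forced to control messy boundary corrections coming from misalignment with $\sigma^k$-blocks, and the naive bound on those terms does not beat the contraction $\|(M|_{E_s})^k\|$. Once recognisability is granted, the Pisot contraction closes the estimate, and the density extension produces the desired continuous map $\pi : \Omega_\sigma \to X_\sigma$, which is automatically surjective onto $X_\sigma$ by construction of the Rauzy fractal.
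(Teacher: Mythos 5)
The paper does not actually prove this proposition: its ``proof'' is a one-line citation of \cite{PF}, where the statement is obtained from the continuity of the prefix-suffix expansion (Canterini--Siegel) together with the geometric convergence of the associated series. Your argument is, in substance, that same proof written out by hand rather than a different route: defining $\pi$ on the dense orbit $\{S^n u\}$ by projecting the vertices of the stepped line, desubstituting $k$ times to pull out a factor $(M|_{E_s})^k$, and letting the Pisot contraction close the estimate is exactly the mechanism behind the convergence of $\sum_i M^i \pi_s l(p_i)$ over the prefix-suffix expansion. You have correctly identified the two essential ingredients (recognizability and the contraction of $M$ on $E_s$), and the density-extension step and the surjectivity remark are fine.

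Two inaccuracies are worth fixing. First, recognizability does not let you take $n = |\sigma^k(u_{[0,p)})|$ ``exactly, with no leftover boundary term'': a general position $n$ is not a $\sigma^k$-cutting point (take $n=1$ and $k$ large). What recognizability actually gives is that the boundary decompositions of $n$ and of $m$ \emph{coincide} once $S^n u$ and $S^m u$ agree on a long enough window, so the two boundary contributions are equal and cancel in the difference; your displayed identity survives for that reason, not because the boundary terms vanish. Second, Moss\'e's theorem is a bilateral statement: it identifies cutting points from a two-sided window, whereas closeness in $\mathcal{A}^{\mathbb{N}}$ only controls the forward window (consider the pair $n=0$, $m$ large). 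This mismatch is a standard but genuine technical point --- it is precisely why the literature routes the argument through the continuity of the prefix-suffix expansion map --- and you should either pass to the two-sided subshift or invoke the Canterini--Siegel result directly. Neither issue changes the architecture of your proof.
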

\begin{proof}
The proof is given in \cite{PF}.
\end{proof}
 We denote by $X_\sigma$ the Rauzy fractal
(Central tile) associated to $\sigma$ : $ X_\sigma :=
\overline{\{\pi_s(l(u_0\ldots u_{k-1}), k\in{\mathbb{N}}\}} $. with $u_0\ldots
u_{k-1}$ is a prefix of the fixed point of length $k$. Subtiles of the central
tile $X_\sigma$ are naturally defined, depending on the letter associated to
the vertex of the stepped line that is projected. On thus sets for
$i\in\mathcal{A}$ : $ X_\sigma(i) := \overline{\{\pi_s(l(u_0\ldots u_{k-1}),
k\in{\mathbb{N}}, u_k=i\}} $.

\begin{proposition}
Let $\sigma$ a Pisot substitution and $X_\sigma$ its associated Rauzy fractal.
The boundary of $X_\sigma$ has zero measure.
\end{proposition}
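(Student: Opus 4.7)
The plan is to realise both $X_\sigma$ and its boundary as attractors of graph-directed iterated function systems (GIFS) with common contraction $h := M_\sigma|_{E_s}$, and then to conclude that the former has positive Lebesgue measure while the latter has zero measure by comparing the growth rates of the two systems.

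Since $\sigma$ is of Pisot type, the eigenvalues of $h$ are the Galois conjugates of $\beta$, so $h$ is a genuine linear contraction with $|\det h| = 1/\beta < 1$ (using unimodularity). Regrouping the prefixes of a fixed point of $\sigma$ according to desubstitution yields the standard subtile equation
$$X_\sigma(i) \;=\; \bigcup_{j \in \mathcal{A}} \bigcup_{(p,s)\,:\,\sigma(j) = p\,i\,s} \bigl( h(X_\sigma(j)) + \pi_s(l(p)) \bigr).$$
Taking Lebesgue measure on both sides and applying Perron--Frobenius to $M_\sigma^t$ gives $(\mu(X_\sigma(i)))_{i\in\mathcal{A}}$ as a positive left eigenvector, \emph{provided} the unions on the right are essentially disjoint; this disjointness is the classical strong-coincidence consequence of the Pisot/unimodular assumption (Arnoux--Ito). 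So each $X_\sigma(i)$ has positive Lebesgue measure.

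Next, the boundary $\partial X_\sigma$ is covered by the pairwise intersections $X_\sigma(i) \cap (X_\sigma(j) + \gamma)$ for $\gamma$ ranging over a finite ``contact'' set $\Gamma \subset \pi_s(\mathbb{Z}^d)$. Inserting the subtile equation into each such intersection and reorganising shows that every intersection of this kind is itself covered by finitely many $h$-images of intersections belonging to the same family. This produces a second GIFS, with the same contraction $h$ and finitely many maps, whose attractor contains $\partial X_\sigma$ --- the \emph{contact graph} construction of Thurston, Sirvent--Wang, and Siegel--Thuswaldner.

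To finish, let $a(n) \asymp \beta^n$ and $b(n)$ denote the number of cylinders of level $n$ in the subtile and boundary GIFS respectively. Covering $\partial X_\sigma$ by boundary cylinders of level $n$ yields a Lebesgue bound $C \cdot b(n) \cdot |\det h|^n = C \cdot b(n) \cdot \beta^{-n}$. The main obstacle is precisely the verification that the Perron eigenvalue of the boundary-graph incidence matrix is strictly smaller than $\beta$: this is where the Pisot and unimodular hypotheses enter essentially, and it is exactly the content of the contact-graph analyses cited above, which I would invoke rather than reprove. Granting this, $b(n) = o(\beta^n)$, so the bound tends to $0$ and $\partial X_\sigma$ has zero Lebesgue measure.
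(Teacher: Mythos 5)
The paper offers no argument of its own here: its ``proof'' is a bare citation to Barge--Kwapisz and Sirvent--Wang, which are precisely the contact-graph analyses you outline, so your sketch is essentially the same approach --- a faithful reconstruction of the cited argument, with the one genuinely hard step (the spectral radius of the boundary graph being strictly below $\beta$) delegated back to those same sources. One small correction: the measure-disjointness of the pieces in the GIFS equation for a single subtile $X_\sigma(i)$ is \emph{not} a consequence of strong coincidence (which concerns disjointness of \emph{distinct} subtiles and remains conjectural for more than two letters); it is obtained by a direct counting/measure argument, and in any case it is not needed for the zero-measure conclusion about $\partial X_\sigma$.
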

\begin{proof}
See \cite{MB} and \cite{VS}.
\end{proof}

\begin{figure}[h]
\begin{center}
\scalebox{0.5}{\includegraphics{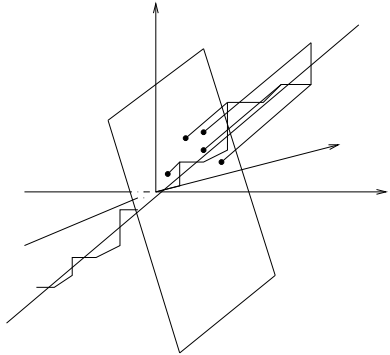}}

\caption{ The projection method to get the Rauzy fractal.}
\end{center}
\end{figure}

\subsection{Central tiles viewed as a graph directed iterated function}

The tiles $X_\sigma(i)$ can be written as a so-called graph iterated function
system (GIFS).

\begin{definition}(GIFS)

Let G be a finite directed graph with set of vertices $\{1,\ldots,q\}$ and set
of edges $E$. Denote the set of edges leading from $i$ to $j$ by $E_{ij}$. To
each $e\in E$ associated a contractive mapping $\tau_e:\mathbb{R}^n\rightarrow
\mathbb{R}^n$. If for each $i$ there is some outgoing edge we call
$(G,\{\tau_e\})$ a GIFS.
\end{definition}

\begin{definition}(Prefix-suffix automaton)

Let $\sigma$ be a substitution over the alphabet $\mathcal{A}$ and let
$\mathcal{P}$ be the finite set
\begin{center}
${P}:=
\{(p,a,s)\in\mathcal{A}^\ast\times\mathcal{A}\times\mathcal{A}^\ast; \exists
b\in \mathcal{A}, \sigma(b)=pas\}$.
\end{center}
The prefix-suffix automaton of sigma has $\mathcal{A}$ as a set of vertices and
$\mathcal{P}$ as a set of label edges : there is an edge labeled by $(p,a,s)$
from $a$ to $b$ if and only if $pas=\sigma(b)$.
\end{definition}

\textbf{Example.} For the Fibonacci substitution $1\mapsto 12$ and $2\mapsto
1$, one gets:
\begin{center}
${P}=\{(e,1,2),(1,2,e),(e,1,e)\}$.
\end{center}

The prefix-suffix automaton of the Fibonacci substitution is :

\begin{figure}[h]
\begin{center}

\scalebox{0.4}{\includegraphics{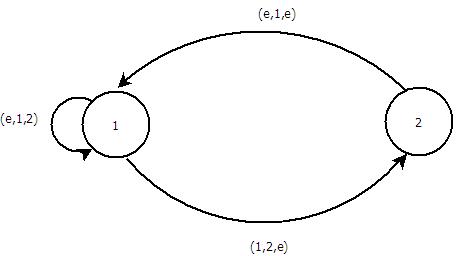}}

\caption{ Prefix-suffix automaton for the fibonacci substitution.}
\end{center}
\end{figure}

\begin{theoreme}
Let $\sigma$ be a primitive unit Pisot substitution over the alphabet
$\mathcal{A}$. The central tile $X_\sigma$ is a compact subset with nonempty
interior. Each subtile is the closure of its interior. The subtiles of
$X_\sigma$ are solution of the $GIFS$
\begin{center}
$\forall i \in \mathcal{A}, X_\sigma(i) = \bigcup_{j\in\mathcal{A}, i
\xrightarrow [\text{}]{\text{(p,i,s)}}j}$   $M X_\sigma (j) + \pi_s l(p)$.

\end{center}
\end{theoreme}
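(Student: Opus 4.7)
The plan is to establish the GIFS relation by a direct desubstitution argument on prefixes of a fixed point, then address the topological claims by invoking measure-theoretic consequences of the Pisot unimodular hypothesis. Fix a fixed point $u = u_0 u_1 u_2 \cdots$ of $\sigma$ (which exists, up to replacing $\sigma$ by an iterate, by primitivity). The key identity $u = \sigma(u)$ gives every prefix of $u$ a unique decomposition through the substitution, and this is the engine of the proof.

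I would prove the GIFS equation by double inclusion. Given a prefix $u_0 \cdots u_{k-1}$ with $u_k = i$, let $m$ be the unique index with $|\sigma(u_0 \cdots u_{m-1})| \leq k < |\sigma(u_0 \cdots u_m)|$, and set $j := u_m$. Then $\sigma(j)$ has the form $p \cdot i \cdot s$, producing an edge $i \xrightarrow{(p,i,s)} j$ in the prefix-suffix automaton, and by construction $u_0 \cdots u_{k-1} = \sigma(u_0 \cdots u_{m-1}) \cdot p$. Using $l \circ \sigma = M \circ l$ together with $\pi_s \circ M = M|_{E_s} \circ \pi_s$ (which holds because $M$ preserves $E_s$), one obtains
$$\pi_s\, l(u_0 \cdots u_{k-1}) \;=\; M\,\pi_s\, l(u_0 \cdots u_{m-1}) \;+\; \pi_s\, l(p).$$
Since $u_m = j$, the point $\pi_s\, l(u_0 \cdots u_{m-1})$ lies in the defining dense set of $X_\sigma(j)$; taking closures (using continuity of $M|_{E_s}$) yields the inclusion $\subseteq$. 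The reverse inclusion is the mirror argument: given a $j$-prefix ending at position $m$ and an edge $i \xrightarrow{(p,i,s)} j$, the concatenation $\sigma(u_0 \cdots u_{m-1}) \cdot p$ is itself a prefix of $u$ whose next letter is $i$, so the corresponding projected point belongs to $X_\sigma(i)$.

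For the topological claims, compactness of each $X_\sigma(i)$ follows directly from boundedness of the projected stepped line, a consequence of the Pisot hypothesis (the deviation from $E_u$ stays bounded because the non-dominant eigenvalues are strictly contracting). The nonempty-interior and closure-of-interior statements are deeper: combining the proposition above (stating that $\partial X_\sigma$ has zero Lebesgue measure) with the fact that the restriction of $M$ to $E_s$ has determinant of modulus $1/\beta < 1$ (by unimodularity and the Pisot property), iterating the GIFS and summing Lebesgue measures shows that each subtile is non-degenerate and carries positive Lebesgue measure, hence has nonempty interior. The closure-of-interior statement then follows from self-similarity together with standard measure-regularity arguments applied to the GIFS.

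The main obstacle is the nonempty-interior statement. The GIFS equation itself is little more than careful bookkeeping with the prefix decomposition, but showing that each subtile has positive Lebesgue measure---and thus nonempty interior---genuinely requires the unimodular Pisot hypothesis and is the technical heart of the theorem; in practice one invokes the detailed analyses of Arnoux--Ito, Sirvent--Wang, and Berth\'e--Siegel cited earlier in the paper. My plan is therefore to write out the GIFS in full detail and defer the interior claim to these references.
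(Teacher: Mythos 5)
The paper itself offers no argument for this theorem: its ``proof'' is a one-line citation to \cite{ASJT}. Your desubstitution derivation of the GIFS equation is the standard one (Arnoux--Ito, Canterini--Siegel) and is correct: the decomposition $u_0\cdots u_{k-1}=\sigma(u_0\cdots u_{m-1})\cdot p$ with $\sigma(u_m)=p\,u_k\,s$, the identity $l\circ\sigma=M\circ l$, and the fact that $M$ preserves $E_s$ give exactly the stated set equation after taking closures, and the reverse inclusion is indeed the mirror statement that $\sigma(u_0\cdots u_{m-1})p$ is again a prefix of $u$ followed by the letter $i$. Compactness via boundedness of the projected stepped line is also fine (though a complete argument for boundedness itself goes through the prefix--suffix expansion of prefixes and the geometric decay of $\|M^n|_{E_s}\|$). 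So on the GIFS and compactness you supply genuine content where the paper supplies none.

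The soft spot is the nonempty-interior claim, and your sketched justification for it is not sound as written. First, positive Lebesgue measure does not imply nonempty interior for a general compact set (fat Cantor sets), so the inference ``positive measure, hence nonempty interior'' needs the extra input that $X_\sigma$ equals the closure of its interior or that $\partial X_\sigma$ is null --- but the latter is precisely one of the facts you are trying to establish, so the reasoning is circular in the order you present it. Second, ``iterating the GIFS and summing Lebesgue measures'' only yields the inequality $\mu(X_\sigma(i))\le\sum_{(p,i,s):i\to j}\beta^{-1}\mu(X_\sigma(j))$, which is consistent with all the measures being zero; it cannot by itself rule out $\mu(X_\sigma)=0$. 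The standard route to positivity is the covering $E_s=\bigcup_{\gamma\in\Gamma}\bigl(X_\sigma+\pi_s(\gamma)\bigr)$ by lattice translates of the central tile: countably many translates of a null set cannot cover $E_s$ (equivalently, Baire category gives one translate, hence $X_\sigma$ itself, nonempty interior), and only then does the GIFS self-similarity upgrade this to each subtile and to the closure-of-interior statement. Since you explicitly defer this part to the literature the deferral is legitimate --- it is exactly what the paper does for the whole theorem --- but the measure-theoretic shortcut you sketch in its place should be deleted or replaced by the lattice-covering argument.
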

\begin{proof}
The proof is given in \cite{ASJT}.

\end{proof}

\subsection{Disjointness of the subtiles of the central tile}

To ensure that the subtiles are disjoint, we introduce the following
combinatorial condition in substitutions.

\begin{definition}(Strong coincidence condition).

A substitution $\sigma$ over the alphabet $\mathcal{A}$ satisfies the strong
coincidence condition if for every pair $(b_1,b_2)\in \mathcal{A}^2$, there
exist $k\in\mathbb{N}$ and $a\in\mathcal{A}$ such that $\sigma^k(b_1)=p_1as_1$
and $\sigma^k(b_2)=p_2as_2$ with $l(p_1)=l(p_2)$ or $l(s_1)=l(s_2)$.
\end{definition}
\textbf{Remark.} The strong coincidence condition is satisfied by every unit
Pisot substitution over two letter alphabet \cite{MBBD}. It is conjectured that
every substitution of Pisot type satisfies the strong coincidence condition.

\begin{theoreme}
Let $\sigma$ be a primitive unit Pisot substitution. If $\sigma$ satisfies the
strong coincidence condition, then the subtiles of the central tiles have
disjoint interiors.
\end{theoreme}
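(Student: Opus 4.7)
My plan is to work with Lebesgue measure $\mu$ on the contracting plane $E_s$ and reduce disjointness of the interiors of the subtiles to the measure-theoretic statement $\mu(X_\sigma(i)\cap X_\sigma(j))=0$ for $i\neq j$. Since Theorem 2.1 guarantees that each $X_\sigma(i)$ is the closure of its interior, this measure-zero statement immediately implies disjointness of the interiors. The three ingredients I will combine are the GIFS of Theorem 2.1, the boundary-zero property of Proposition 2.2, and the strong coincidence hypothesis, together with Perron-Frobenius applied to the incidence matrix.

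First I will show, using Perron-Frobenius, that the GIFS union is essentially disjoint. Because $\sigma$ is unimodular and $\beta$ is its unique expanding eigenvalue, $M|_{E_s}$ has absolute determinant $1/\beta$, so $\mu(MX)=\beta^{-1}\mu(X)$ for every measurable $X\subseteq E_s$. The number of edges from $i$ to $j$ in the prefix-suffix automaton is $M_{ij}$, so the GIFS gives
\[
m_i:=\mu(X_\sigma(i))\le\beta^{-1}\sum_{j}M_{ij}\,m_j,
\]
i.e.\ $M\vec m\ge\beta\vec m$ componentwise. Since $\vec m>0$ (each subtile has nonempty interior by Theorem 2.1, hence positive Lebesgue measure) and $M$ is primitive with Perron eigenvalue $\beta$, iterating and passing to the limit $\beta^{-k}M^k\to\vec v\vec w^{\,T}$ forces $M\vec m=\beta\vec m$; hence every GIFS inequality must be an equality and the level-one decomposition is essentially disjoint. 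By induction this lifts to every iterate: for each $k\ge1$ and each letter $a$,
\[
X_\sigma(a)\;=\;\bigcup_{(b,p,s):\,\sigma^k(b)=pas}\bigl(M^k X_\sigma(b)+\pi_s l(p)\bigr)
\]
is an essentially disjoint union, the index being the prefix-suffix triple of a single occurrence of $a$ inside $\sigma^k(b)$.

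The last step is to invoke strong coincidence. For each pair $b_1\neq b_2$ the hypothesis supplies $k$, $a$, and decompositions $\sigma^k(b_j)=p_j a s_j$ with $l(p_1)=l(p_2)$ (the symmetric case $l(s_1)=l(s_2)$ being analogous). The triples $(b_1,p_1,s_1)$ and $(b_2,p_2,s_2)$ are distinct because $b_1\neq b_2$, so essential disjointness of the level-$k$ decomposition of $X_\sigma(a)$ gives
\[
\mu\bigl((M^kX_\sigma(b_1)+\pi_s l(p_1))\cap(M^kX_\sigma(b_2)+\pi_s l(p_2))\bigr)=0.
\]
Since $l(p_1)=l(p_2)$ the two translations coincide, so this reduces to $\mu(M^k(X_\sigma(b_1)\cap X_\sigma(b_2)))=0$, and multiplying by $\beta^k$ yields $\mu(X_\sigma(b_1)\cap X_\sigma(b_2))=0$, as required.

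The step I expect to cause the most trouble is the Perron-Frobenius deduction: I need both that $\vec m>0$ (which relies on invoking Theorem 2.1's nontrivial claim that every subtile has nonempty interior) and that $M\vec m\ge\beta\vec m$ with $M$ primitive forces proportionality to the Perron eigenvector. The latter is standard but delicate; it hinges on strict positivity of the dual Perron eigenvector $\vec w$, so that $\vec w^{\,T}(c\vec v-\vec m)=0$ combined with $c\vec v-\vec m\ge0$ forces equality. Once these are settled, the strong-coincidence step reduces to a short bookkeeping exercise using the essentially disjoint iterated GIFS.
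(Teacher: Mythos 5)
The paper does not actually prove this theorem: its ``proof'' is a one-line citation of Arnoux--Ito \cite{PA}. Your argument is a correct reconstruction of the standard proof from that literature, and the main chain is sound: unimodularity gives $\mu(MX)=\beta^{-1}\mu(X)$ on $E_s$; the GIFS of Theorem 2.1 then gives $M\vec m\ge\beta\vec m$; pairing with the strictly positive left Perron eigenvector of the primitive matrix $M$ forces equality, hence measure-disjointness of the level-one union and, by iteration, of every level-$k$ union; and strong coincidence exhibits $M^kX_\sigma(b_1)+\pi_s l(p_1)$ and $M^kX_\sigma(b_2)+\pi_s l(p_2)$ as two distinct pieces of the same decomposition of $X_\sigma(a)$. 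The final passage from $\mu(X_\sigma(b_1)\cap X_\sigma(b_2))=0$ to disjoint interiors is immediate, since a nonempty open intersection would have positive Lebesgue measure (you do not even need ``closure of its interior'' for this direction).

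The one step that is not as routine as you claim is the suffix alternative in the paper's definition of strong coincidence. If only $l(s_1)=l(s_2)$ holds, then $l(p_1)-l(p_2)=M^k(e_{b_1}-e_{b_2})$, so the two translations $\pi_s l(p_1)$ and $\pi_s l(p_2)$ do \emph{not} coincide. Running your computation in that case yields
\[
\mu\bigl((X_\sigma(b_1)+\pi_s(e_{b_1}))\cap(X_\sigma(b_2)+\pi_s(e_{b_2}))\bigr)=0,
\]
i.e.\ essential disjointness of the subtiles in the other standard normalization (where the projected vertex is taken \emph{after} the distinguished letter rather than before it), which is not literally the statement about the $X_\sigma(i)$ as defined in this paper; neither version obviously implies the other. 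So either restrict to the prefix form $l(p_1)=l(p_2)$ of the coincidence condition (as \cite{PA} effectively does), or supply an additional argument relating the two normalizations; as written, ``the symmetric case being analogous'' covers a real discrepancy. Everything else in your proposal, including the Perron--Frobenius step you flagged as delicate, is handled correctly.
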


\begin{proof}
The proof for the disjointness is given in \cite{PA}.
\end{proof}
\textbf{Remark.} If $0$ is inner point to the Rauzy fractal associated to a
Pisot substitution then the subtiles of the central tiles have disjoint
interiors (see \cite{ASJT}).

\subsection{Substitutive sets}

A substitutive set is the closure of the projection of a canonical stepped line associated to substitution on a contracting space of a  restriction of a positive integer matrix. In particular a Rauzy fractal is a substitutive set since it is the projection of  canonical stepped line associated to a fixed point  on the contracting space associated to the matrix of substitution. So we can expand the definition of Rauzy fractal to substitutive set. 
In particular a substitutive set can be expressed as the attractor of some graph directed iterated function system (IFS). See \cite{PAJBXB}

\section{Intersection of Rauzy fractals}
Let $\sigma_1$ and $\sigma_2$ two Pisot irreducible substitutions with the same incidence matrix, we consider
$X_{\sigma_1}$ and $X_{\sigma_2}$ their associated Rauzy fractals respectively.\\
The intersection of  $X_{\sigma_1}$ and $X_{\sigma_2}$ is non-empty since it
contains $0$, and it is a compact set (intersection of two compacts).
\begin{proposition}

Let $\sigma_1$ and $\sigma_2$ be two Pisot irreducible substitutions with the same
incidence matrix. We consider $L_1$ and $L_2$ the canonical broken lines associated to a fixed point of $\sigma_1$ and
$\sigma_2$ respectively, let $P_1$ and $P_2$ two points from $L_1$ and $L_2$ respectively.
Then $\pi_s(P_1)=\pi_s(P_2)$ implies $P_1=P_2$.
\end{proposition}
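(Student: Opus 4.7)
The strategy is to exploit the integer lattice structure of the canonical stepped lines together with the irreducibility of the characteristic polynomial of the common incidence matrix $M$.

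First I would observe that every vertex of a canonical stepped line is a finite sum of elements of the canonical basis of $\mathbb{R}^d$, and hence lies in $\mathbb{Z}^d$. Thus $P_1 - P_2 \in \mathbb{Z}^d$. Since by construction $\pi_s$ is the projection onto the contracting space $E_s$ parallel to the expanding line $E_u = \mathbb{R}v_\beta$ spanned by the Perron--Frobenius eigenvector, the hypothesis $\pi_s(P_1) = \pi_s(P_2)$ gives $P_1 - P_2 \in \ker \pi_s = \mathbb{R} v_\beta$, so
\[
P_1 - P_2 \in \mathbb{Z}^d \cap \mathbb{R} v_\beta.
\]

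The crux of the proof is then to show that this intersection is reduced to $\{0\}$, which is purely a statement about the spectrum of $M$. Writing $P_1 - P_2 = \lambda v_\beta$ for some $\lambda \in \mathbb{R}$, I would argue that the components $v_{\beta,1},\dots,v_{\beta,d}$ of $v_\beta$ are $\mathbb{Q}$-linearly independent. Indeed, if $a \in \mathbb{Q}^d$ satisfies $\langle a, v_\beta\rangle = 0$, then for each $k$,
\[
\langle a, M^k v_\beta\rangle = \beta^k \langle a, v_\beta\rangle = 0;
\]
but since the characteristic polynomial of $M$ is irreducible of degree $d$, the family $\{v_\beta, Mv_\beta,\dots,M^{d-1}v_\beta\}$ is an $\mathbb{R}$-basis of $\mathbb{R}^d$, so $a = 0$. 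With the $v_{\beta,i}$ mutually $\mathbb{Q}$-independent, the equalities $\lambda v_{\beta,i} \in \mathbb{Z}$ for every $i$ imply a rational relation between any two coordinates of $v_\beta$, which is only possible when $\lambda = 0$. Hence $P_1 = P_2$.

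The only genuine subtlety is the $\mathbb{Q}$-independence of the coordinates of $v_\beta$; this is exactly where the \emph{irreducibility} hypothesis on the characteristic polynomial is used, and the remark after Definition~2.2 on Pisot-reducible substitutions confirms that this assumption cannot be dispensed with. All other steps are essentially bookkeeping on the definition of $\pi_s$ and of the canonical stepped line.
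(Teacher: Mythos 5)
Your overall strategy matches the paper's: both reduce the statement to showing that $\mathbb{Z}^d \cap \mathbb{R}v_\beta = \{0\}$, i.e.\ that the expanding line contains no nonzero integer point. However, the key step of your argument is broken. You claim that irreducibility of the characteristic polynomial makes $\{v_\beta, Mv_\beta, \dots, M^{d-1}v_\beta\}$ an $\mathbb{R}$-basis of $\mathbb{R}^d$. But $v_\beta$ is an eigenvector: $M^k v_\beta = \beta^k v_\beta$, so all of these vectors lie on the single line $\mathbb{R}v_\beta$ and cannot span $\mathbb{R}^d$ for $d \geq 2$. (The family $\{v, Mv, \dots, M^{d-1}v\}$ is a basis exactly when $v$ is a cyclic vector for $M$, and an eigenvector is as far from cyclic as possible.) As written, the deduction that $a = 0$ does not follow, so the $\mathbb{Q}$-linear independence of the coordinates of $v_\beta$ --- which you correctly identify as the crux --- is left unproved.

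The claim itself is true and the argument can be repaired, but one must iterate on the rational covector $a$ rather than on $v_\beta$: if $\langle a, v_\beta\rangle = 0$, then $\langle (M^T)^k a, v_\beta\rangle = \langle a, M^k v_\beta\rangle = \beta^k \langle a, v_\beta\rangle = 0$ for all $k$, so $U = \mathrm{span}_{\mathbb{Q}}\{(M^T)^k a : k \geq 0\}$ is a rational $M^T$-invariant subspace contained in the hyperplane orthogonal to $v_\beta$, hence proper; irreducibility of the characteristic polynomial (the same for $M^T$ as for $M$) then forces $U = \{0\}$, so $a = 0$. Alternatively --- and this is essentially the paper's one-line proof --- you do not need $\mathbb{Q}$-independence of the coordinates at all: if $0 \neq w \in \mathbb{Z}^d \cap \mathbb{R}v_\beta$, then $w$ is an integer eigenvector of the integer matrix $M$, so $\beta = (Mw)_i / w_i$ is rational for any $i$ with $w_i \neq 0$, contradicting the fact that $\beta$ has degree $d \geq 2$ over $\mathbb{Q}$. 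Either repair closes the gap; the remaining bookkeeping in your proposal ($P_1 - P_2 \in \mathbb{Z}^d$, $\ker\pi_s = \mathbb{R}v_\beta$) is correct.
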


\begin{proof}
The Perron Frobenius eigenvalues is irrational in the irreducible case. We
project $P_1$ and $P_2$ in the contracting space parallel to the expanding
space (the direction of the Perron Frobenius eigenvectors). If
$\pi_s(P_1)=\pi_s(P_2)$ then $(P_1P_2)$ is parallel to the expanding direction.
This implies that expanding direction is rational.

\end{proof}

\begin{proposition}
Let $\sigma_1$ and $\sigma_2$ be two substitutions with the same incidence
matrix, we consider $X_{\sigma_1}$ (resp. $X_{\sigma_2}$) the Rauzy fractal
associated to $\sigma_1$ (resp. $\sigma_2$) and $X_\sigma$ the common point of $X_{\sigma_1}$ and $X_{\sigma_2}$.\\
Then the boundary of $X_\sigma$ is included in the union boundary of
$X_{\sigma_1}$ and $X_{\sigma_2}$ and  has zero measure.
\end{proposition}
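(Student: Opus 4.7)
The plan is to decompose the statement into two independent claims: a topological inclusion for the boundary, and a measure-theoretic consequence obtained by combining that inclusion with Proposition~2.2.

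First I would verify the set-theoretic inclusion $\partial X_\sigma \subseteq \partial X_{\sigma_1} \cup \partial X_{\sigma_2}$, which is a general property of the intersection of two closed sets in $\mathbb{R}^{d-1}$. Both $X_{\sigma_1}$ and $X_{\sigma_2}$ are compact, hence closed, so $X_\sigma = X_{\sigma_1} \cap X_{\sigma_2}$ is also closed and $\overline{X_\sigma}=X_\sigma$. Take any point $x\in\partial X_\sigma$. Then $x\in X_{\sigma_1}$ and $x\in X_{\sigma_2}$. Argue by contradiction: if $x$ belonged neither to $\partial X_{\sigma_1}$ nor to $\partial X_{\sigma_2}$, then since each $X_{\sigma_i}$ is closed one would have $x\in\mathrm{int}(X_{\sigma_1})\cap\mathrm{int}(X_{\sigma_2})$; the intersection of these two interiors is open, so it would provide a neighbourhood of $x$ entirely contained in $X_\sigma$, contradicting $x\in\partial X_\sigma$. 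This yields the desired inclusion.

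Next I would deduce the measure statement. Denote by $\mu$ the Lebesgue measure on the contracting hyperplane $E_s$. Proposition~2.2 applied to each of $\sigma_1$ and $\sigma_2$ gives $\mu(\partial X_{\sigma_1})=\mu(\partial X_{\sigma_2})=0$. Combining with the inclusion obtained in the first step and monotonicity/subadditivity of $\mu$,
\begin{equation*}
\mu(\partial X_\sigma)\;\leq\;\mu(\partial X_{\sigma_1}\cup\partial X_{\sigma_2})\;\leq\;\mu(\partial X_{\sigma_1})+\mu(\partial X_{\sigma_2})\;=\;0,
\end{equation*}
so $\partial X_\sigma$ has zero measure.

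There is no serious obstacle here; the only subtlety worth stating explicitly is that the inclusion $\partial(A\cap B)\subseteq \partial A\cup\partial B$ really does require both $A$ and $B$ to be closed (it fails in general), and this hypothesis is satisfied because Rauzy fractals are compact. Everything else reduces to invoking Proposition~2.2 and finite subadditivity of Lebesgue measure.
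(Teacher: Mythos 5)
Your proposal is correct and follows essentially the same route as the paper: both establish the inclusion $\partial X_\sigma \subseteq \partial X_{\sigma_1}\cup\partial X_{\sigma_2}$ by showing that a boundary point of $X_\sigma$ lying in neither boundary would have a neighbourhood (the paper takes a ball of radius $\min(r_1,r_2)$, you take the intersection of the two interiors) contained in $X_{\sigma_1}\cap X_{\sigma_2}$, a contradiction, and then conclude zero measure from Proposition~2.2 and subadditivity. Your version is slightly more careful in noting explicitly that closedness of the $X_{\sigma_i}$ is what lets "not on the boundary" mean "in the interior", but the argument is the same.
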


\begin{proof}
Let $x$ a point from the boundary of $X_\sigma$. We suppose that $x$ is not on
the boundary of $X_{\sigma_1}$. Then there exist $r_1>0$ such that
$B(x,r_1)\subset X_{\sigma_1} $. If $x$ is not in the boundary of
$X_{\sigma_2}$ then there exist $r_2>0$ such that $B(x,r_2)\subset
X_{\sigma_2}$. Then there exist $r=\min(r_1, r_2)$ such that $B(x,r)\subset
X_{\sigma_1}\cap X_{\sigma_2}$, $x$ is in the boundary of $X_\sigma$ then $x$
is in the boundary of $X_{\sigma_2}$.
Then $\partial X_\sigma \subset \partial X_{\sigma_1}\cup \partial X_{\sigma_2}$.\\
Since $\partial X_{\sigma_1}$ and $\partial X_{\sigma_2}$ have zero measure then $\partial X_\sigma$
has zero measure.
\end{proof}

\begin{figure}[h]

\begin{center}
\scalebox{0.5}{\includegraphics{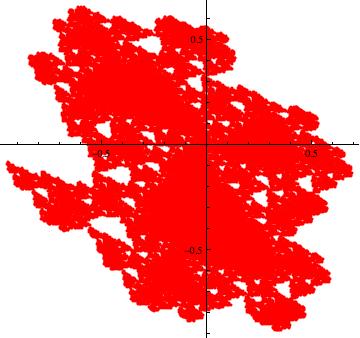}}

\caption{Sets of common points of the fractals of Tribonacci and the flipped
Tribonacci .}
\end{center}

\end{figure}

\subsection{The main result: Morphism generating the common points of two Pisot substitutions with the same incidence
matrix}

In this section  we consider $\sigma_1$ and $\sigma_2$ be two unimodular
irreducible Pisot substitutions with the same incidence matrix. We denote
$X_{\sigma_1}$ and $X_{\sigma_2}$ their associated  Rauzy fractals
respectively. We suppose that $0$ is an inner point to $X_{\sigma_1}$. We note
$X_\sigma$ the closure of the intersection of the interior of $X_{\sigma_1}$
and the interior of $X_{\sigma_2}$. Let $(\Omega_{\sigma_1},S)$ and
$(\Omega_{\sigma_2},S)$ the symbolic dynamical systems associated to $\sigma_1$
and $\sigma_2$ respectively. We consider $\pi_1$ (resp. $\pi_2$) the projection
map from the symbolic dynamical system $(X_{\sigma_1}, S)$ into the
Rauzy fractal (rep.$\pi_2$).\\

We will prove that $X_\sigma$ is a substitutive set, and it can be generated by
a substitution obtained with algorithm generating the common point of the
interior of $X_{\sigma_1}$ and $X_{\sigma_2}$.

\begin{definition}
For a dynamical system $(X,T)$ if $A$ is a subset of $X$, and $x\in A$, we
define the first return time of $x$ as $n_x = \inf \{n\in\mathbb{N}^*|T^n(x)\in
A\}$ (it is infinite if the orbit of $x$ does not come back to $A$). If the
first return time is finite for all $x\in A$, we define the induced map of $T$
on $A$ (or first return map) as the map $x\mapsto T^{n_k}(x)$, and we denote
this map by $T_A$.
\end{definition}

\begin{definition}
A sequence $u = (u_n)$ is minimal (or uniformly recurrent) if every word
occurring in $u$ occurs in an infinite number of positions with bounded gaps,
that is, if for every factor $W$, there exist $s$ such that for every $n$, $W$
is a factor of $u_n\ldots u_{n+s-1}$.

\end{definition}
\begin{lemme}
The closure of the intersection $X_\sigma$ has non empty interior and non-zero
Lebesque measure.
\end{lemme}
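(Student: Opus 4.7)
The plan is to find a single point that lies in the interior of both Rauzy fractals simultaneously; once such a point exists, the open set $\mathrm{int}(X_{\sigma_1}) \cap \mathrm{int}(X_{\sigma_2})$ contains a ball, and its closure $X_\sigma$ automatically inherits non-empty interior and strictly positive Lebesgue measure. The two ingredients I expect to use are the hypothesis on $\sigma_1$ and the general structural fact (Theorem 2.2) that each central tile is the closure of its interior.

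First I would use the hypothesis that $0 \in \mathrm{int}(X_{\sigma_1})$ to fix a radius $r > 0$ with $B(0,r) \subset X_{\sigma_1}$; since $B(0,r)$ is open, every one of its points lies in $\mathrm{int}(X_{\sigma_1})$. Next I would note that $0 \in X_{\sigma_2}$, because $0 = \pi_s(l(\varepsilon))$ is the projection of the trivial prefix of any fixed point of $\sigma_2$. Applying Theorem 2.2 to $\sigma_2$, each subtile $X_{\sigma_2}(i)$ coincides with the closure of its interior, and hence $X_{\sigma_2} = \overline{\mathrm{int}(X_{\sigma_2})}$. Therefore $0$ is a limit point of $\mathrm{int}(X_{\sigma_2})$, so the open ball $B(0,r)$ must meet $\mathrm{int}(X_{\sigma_2})$. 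Choose any point $y$ in the non-empty open set $B(0,r) \cap \mathrm{int}(X_{\sigma_2})$: then $y \in B(0,r) \subset X_{\sigma_1}$ with $B(0,r)$ open, so $y \in \mathrm{int}(X_{\sigma_1})$, while by construction $y \in \mathrm{int}(X_{\sigma_2})$.

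Consequently $\mathrm{int}(X_{\sigma_1}) \cap \mathrm{int}(X_{\sigma_2})$ is a non-empty open set, and contains a small ball $B(y,\rho)$ of strictly positive Lebesgue measure. Since $X_\sigma$ is defined as the closure of this intersection, it contains $B(y,\rho)$, which is an open set with positive measure, so $X_\sigma$ has non-empty interior and non-zero Lebesgue measure. I do not expect a serious obstacle: the only delicate point is ensuring that $0$ is not a "thin" boundary point of $X_{\sigma_2}$ through which $B(0,r)$ could enter $X_{\sigma_2}$ only along a measure-zero set, and this pathology is exactly what Theorem 2.2 rules out by guaranteeing $X_{\sigma_2} = \overline{\mathrm{int}(X_{\sigma_2})}$.
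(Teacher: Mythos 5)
Your proof is correct and follows essentially the same route as the paper: both arguments use the hypothesis that $0$ is interior to $X_{\sigma_1}$ together with the fact that $X_{\sigma_2}$ is the closure of its interior to produce a point (the paper uses a sequence $x_n \to 0$ of interior points of $X_{\sigma_2}$, you use a limit-point argument, which is the same thing) lying in both interiors, and then conclude that the intersection contains a non-empty open set of positive Lebesgue measure.
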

\begin{proof}
We suppose that $0$ is an inner point to $X_{\sigma_1}$. Then there exist an
open set $U$ such that $0\in U \subset X_{\sigma_1}$. The Rauzy fractal is the
closure of its interior and $0$  is a point of $X_{\sigma_2}$, hence there
exist a sequence of points $(x_n)_{n\in\mathbb{N}}$ from the interior of
$X_{\sigma_2}$ which converges to $0$. Then there exist open sets $V_n$ such
that $x_n\in V_n\subset X_{\sigma_2}$. Since $(x_n)$
converge to $0$, there exists $N\in\mathbb{N}$  such that $x_N\in U$.\\
We denote by $W$ the open set $W=U\cap V_N$,$W$ is non-empty and $W\subset
X_{\sigma_1}\cap X_{\sigma_2}$. The intersection of $X_{\sigma_1}$ and
$X_{\sigma_2}$ contains a non empty open set, hence it has non-zero Lebesque
mesure.
\end{proof}

We define the subgroup  $\Gamma$ of $\mathbb{Z}^d$  as : $$\Gamma =
\{\sum_{i=1}^d n_ie_i/ \sum_{i=1}^d n_i=0, n_i\in\mathbb{Z}\}$$ with $e_i$ is
the canonical bases of $\mathbb{R}^d.$

\begin{lemme}
Let $\sigma$ be an irreducible Pisot substitution, and $X_\sigma$ its
associated Rauzy fractal. If $0$ is inner point to $X_\sigma$ then $X_\sigma$
is a fundamental domain of $E_s$ for the projection of  $\Gamma$ on the stable
space.
\end{lemme}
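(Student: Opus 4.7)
The plan is to verify the two characterizing properties of a fundamental domain of $E_s$ with respect to $\pi_s(\Gamma)$: a covering property, and an essential disjointness of distinct translates.

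\textbf{Covering.} First I would check that $\pi_s(\Gamma)$ is a genuine lattice of rank $d-1$ in the $(d-1)$-dimensional space $E_s$. Unimodularity makes $\Gamma$ a rank-$(d-1)$ sublattice of $\mathbb{Z}^d$; the Perron--Frobenius eigenvector has strictly positive entries while elements of $\Gamma$ have coordinate sum zero, so $\Gamma\cap E_u=\{0\}$ and $\pi_s$ injects $\Gamma$ onto a discrete subgroup of full rank in $E_s$. The heart of the covering is then the following arithmetic observation: for any $v\in\mathbb{Z}^d$, taking $k=\sum_i v_i$, the prefix vector $l(u_0\cdots u_{k-1})$ of the fixed point has coordinate sum $k$, hence $v-l(u_0\cdots u_{k-1})\in\Gamma$. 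Projecting gives $\pi_s(\mathbb{Z}^d)\subset\pi_s(L)+\pi_s(\Gamma)$, where $L$ denotes the canonical broken line; since $\pi_s(\mathbb{Z}^d)$ is dense in $E_s$ (a standard fact in the irreducible Pisot setting) and $\pi_s(\Gamma)$ is closed while $\overline{\pi_s(L)}=X_\sigma$, taking closures yields $E_s=X_\sigma+\pi_s(\Gamma)$.

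\textbf{Essential disjointness.} Introduce the multiplicity function
\[
m(x):=\#\{\gamma\in\Gamma \mid x\in X_\sigma+\pi_s(\gamma)\},
\]
which is $\ge 1$ almost everywhere by the covering step and satisfies the identity $\mathrm{meas}(X_\sigma)=\int_{E_s/\pi_s(\Gamma)} m(x)\,dx$. Showing that $X_\sigma$ is a fundamental domain amounts to proving $m\equiv 1$ almost everywhere, equivalently that the overlaps $X_\sigma\cap(X_\sigma+\pi_s(\gamma))$ have empty interior for every $\gamma\ne 0$. The GIFS of Theorem~2.2, iterated, yields a self-similar relation for $m$ under the toral endomorphism induced by $M^{-1}$ on $E_s/\pi_s(\Gamma)$. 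This endomorphism is ergodic (its eigenvalues are the Galois conjugates of the Pisot number $\beta$, all of modulus different from one), so $m$ equals some integer constant $c\ge 1$ almost everywhere.

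\textbf{Main obstacle.} The delicate step is to pin down $c=1$ from the single hypothesis that $0$ lies in the interior of $X_\sigma$. The idea is to produce a point of $E_s$ where only the translate indexed by $\gamma=0$ contributes. A neighborhood $B(0,r)\subset X_\sigma$, combined with Proposition~2.2 stating that $\partial X_\sigma$ has zero measure, should allow one to rule out positive-measure overlaps $X_\sigma\cap(X_\sigma+\pi_s(\gamma))$ for $\gamma\ne 0$: such an overlap, pulled back through the GIFS iterations that encode the interior-point property at $0$, would produce a translate of $X_\sigma$ that meets $B(0,r)$ while being strictly different from $X_\sigma$ itself, contradicting $B(0,r)\subset X_\sigma$. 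Making this geometric contradiction rigorous is the principal difficulty of the lemma; once it is achieved, one obtains $c=1$, and $X_\sigma$ is the claimed fundamental domain of $E_s$ for $\pi_s(\Gamma)$.
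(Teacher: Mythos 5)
The paper does not actually prove this lemma; it only cites Siegel--Thuswaldner, so your attempt has to be measured against the standard proof in the literature. Your overall architecture (covering, then a multiple-tiling multiplicity argument, then forcing the covering degree to be $1$) is the right one, and the covering step is essentially correct. But there are two genuine gaps. First, the ``toral endomorphism induced by $M^{-1}$ on $E_s/\pi_s(\Gamma)$'' does not exist in general: since $\sum_i(Mv)_i=\sum_j v_j\,\vert\sigma(j)\vert$, the inclusion $M\Gamma\subset\Gamma$ fails unless all the images $\sigma(j)$ have the same length (already for Tribonacci it fails), so $M^{-1}$ does not descend to that quotient. The a.e.\ constancy of the multiplicity function $m$ is normally obtained instead from the domain-exchange relation $X_\sigma=\bigcup_i\bigl(X_\sigma(i)+\pi_s(e_i)\bigr)$ up to measure zero: all the $\pi_s(e_i)$ are congruent modulo $\pi_s(\Gamma)$, and translation by their common class is an ergodic rotation of the torus leaving $m$ invariant.

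Second, and more seriously, the step you label the ``main obstacle'' is the entire content of the lemma, and the contradiction you sketch does not work: if the covering degree were $c\ge2$, then every point of $B(0,r)$ would lie in $X_\sigma$ and simultaneously in at least one other translate $X_\sigma+\pi_s(\gamma)$, so exhibiting a translate that meets $B(0,r)$ is perfectly compatible with $B(0,r)\subset X_\sigma$ and yields no contradiction. What is needed is a set of positive measure covered exactly once, and the hypothesis that $0$ is an inner point does not hand you one directly. The standard route is combinatorial: the canonical broken line has exactly one vertex $v_n$ on each hyperplane $\{x:\sum_i x_i=n\}$ with $n\ge0$, so the points $\pi_s(v_n)+\pi_s(\gamma)$ enumerate the dense set $\pi_s(\{x\in\mathbb{Z}^d:\sum_i x_i\ge0\})$ without repetition (this uses $E_u\cap\mathbb{Z}^d=\{0\}$, the same irrationality argument the paper uses to show that the two broken lines project injectively); one then transfers this multiplicity-one property from the discrete approximation to the closures by a counting or uniform-distribution argument, and it is there that the inner-point hypothesis enters. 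As written, your proposal stops exactly where the proof begins.
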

\begin{proof}
The proof is given in \cite{ASJT}.

\end{proof}

\begin{lemme}
Let  $W$ be a non-empty open set in $X_\sigma$. Let $V_1 = \pi_1^{-1}(W)$ and
$V_2 = \pi_2^{-1}(W)$ from $\Omega_{\sigma_1}$ and $\Omega_{\sigma_2}$
respectively. If $n$ is a first return time in $V_2$ then $n$ is a return
time in $V_1$.

\end{lemme}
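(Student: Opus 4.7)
The plan rests on the observation that because $\sigma_1$ and $\sigma_2$ have the same incidence matrix, they share not only the contracting space $E_s$ and the projection $\pi_s$ but also induce the same toral translation. Indeed, with $\Gamma = \{v \in \mathbb{Z}^d : \sum_i v_i = 0\}$ and $T = E_s / \pi_s(\Gamma)$, one has $e_i - e_j \in \Gamma$ for every pair $(i,j) \in \mathcal{A}^2$, so all the vectors $\pi_s(e_i)$ descend to a single class $\alpha \in T$. For $k \in \{1, 2\}$ the identity $\pi_k(Sv) = \pi_k(v) + \pi_s(e_{v_0})$ holds on $\Omega_{\sigma_k}$, hence $\rho \circ \pi_k$ (with $\rho : E_s \to T$ the quotient) is a semi-conjugacy from $(\Omega_{\sigma_k}, S)$ to the translation $R_\alpha$ on $T$, and $R_\alpha$ is the same for both values of $k$.

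Given $v \in V_2$ with first return time $n$, set $x = \pi_2(v)$ and $y = \pi_2(S^n v)$, so that $x, y \in W$ and $y - x = \pi_s(P)$ with $P = l(v_0 \ldots v_{n-1})$. Since $\pi_1$ is surjective, I pick $w \in \pi_1^{-1}(x)$, so that $w \in V_1$ automatically. Letting $Q = l(w_0 \ldots w_{n-1})$, the words have the common length $n$, so $P - Q \in \Gamma$ and $\pi_s(Q) \equiv \pi_s(P) \pmod{\pi_s(\Gamma)}$; consequently $\pi_1(S^n w) = x + \pi_s(Q) \equiv y \pmod{\pi_s(\Gamma)}$.

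To conclude, I appeal to Lemma 3.4: since $0$ is interior to $X_{\sigma_1}$, the tile $X_{\sigma_1}$ is a fundamental domain of $E_s$ for $\pi_s(\Gamma)$. As $W$ is open in $E_s$ and contained in $X_{\sigma_1}$, the point $y \in W$ lies in the interior of $X_{\sigma_1}$, hence is the unique element of $X_{\sigma_1}$ in its $\pi_s(\Gamma)$-coset. Since $\pi_1(S^n w) \in X_{\sigma_1}$ and is in the same coset as $y$, we deduce $\pi_1(S^n w) = y \in W$, so $S^n w \in V_1$ and $n$ is a return time in $V_1$.

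The main obstacle is precisely this last uniqueness step: a priori $\pi_1(S^n w)$ might coincide with a boundary representative of $y$ rather than with $y$ itself. The fundamental-domain statement of Lemma 3.4 rules out two interior points being identified modulo $\pi_s(\Gamma)$, so the heart of the matter is verifying that $y$ is interior to $X_{\sigma_1}$, which follows from $W$ being an $E_s$-open subset of $X_{\sigma_1}$ (as in the construction of $W$ in Lemma 3.3). Proposition 3.1 provides a final consistency check: once $\pi_1(S^n w) = y$ is established, one recovers $\pi_s(Q) = \pi_s(P)$ and hence $Q = P$ in $\mathbb{Z}^d$.
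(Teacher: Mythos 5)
Your proof is correct and follows essentially the same route as the paper's: project the return point of the $\sigma_2$-orbit, observe that the corresponding point on the $\sigma_1$-line differs from it by an element of $\pi_s(\Gamma)$ (since the two prefixes have equal length), and invoke the fundamental-domain property of $X_{\sigma_1}$ together with the fact that the return point lies in the \emph{interior} of $X_{\sigma_1}$ to force that element to vanish. Your version merely spells out details the paper leaves implicit (why the difference lies in $\Gamma$, why the point is interior), so no further comparison is needed.
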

\begin{proof}

We consider $v_1\in V_1$ and $v_2\in V_2$ such that  $\pi_1(v_1)= \pi_2(v_2)$.
Let $n$ the first return time of $v_2$ in $V_2$. Then there exist $w\in\Gamma$
such that $\pi_1(S^n(v_1)) = \pi_2(S^n(v_2)) + \pi_2(w)$.
 $\pi_2(S^n(v_2))$ is a point from the interior of $X_\sigma$, then it is a
point from the interior of $X_{\sigma_1}$. And we have $\pi_1(S^n(v_1))$ is a
point from $X_{\sigma_1}$. \\
Since $0$ is an inner point to $X_{\sigma_1}$, from lemma $3.3$,
$X_{\sigma_1}$ is a fundamental domain. Then we obtain $\pi_2(w) = 0$. So we
have $\pi_1(S^n(v_1)) = \pi_2(S^n(v_2))$. Then if $n$ is a return time in $V_2$
we deduce that $n$ is a return time in $V_1$.
\end{proof}
\begin{definition}
Let $U$ and $V$ two finite words, we say that $\begin{pmatrix}
U\\
V\\
\end{pmatrix}$ is balanced block if $l(U) = l(V)$, where $l$ is the abelianization map from $\mathcal{A}^*$ in
$\mathbb{Z}^d$.
\end{definition}
\begin{definition}
A minimal balanced  block is a balanced block, such for every  strict prefix
$U_k$, $V_k$ of $U$ and $V$ respectively of length $k$, $l(U_k) \neq l(V_k)$.

\end{definition}
\begin{lemme}
Let $u$ and $v$ be tow fixed points of $\sigma_1$ and $\sigma_2$ respectively,
then we can decompose $u$ and $v$ on a finite minimal balanced blokcs.
\end{lemme}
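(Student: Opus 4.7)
My strategy is to identify the cut points of the pair $(u,v)$ as the indices $k$ at which the two prefix abelianizations coincide, then to show that infinitely many such indices exist, and finally to deduce that the resulting minimal blocks are drawn from a finite set. Using the bounded distance property recalled in Section~2.2, both canonical stepped lines $\bigl(l(u_0\cdots u_{k-1})\bigr)$ and $\bigl(l(v_0\cdots v_{k-1})\bigr)$ remain within bounded distance of the expanding direction spanned by the right Perron--Frobenius eigenvector of the common incidence matrix $M$. Hence the difference
\[
d_k := l(u_0\cdots u_{k-1}) - l(v_0\cdots v_{k-1})
\]
is uniformly bounded in $\mathbb{R}^d$, and since $d_k$ is integer-valued with zero coordinate sum it takes values in a finite subset $F\subset\Gamma$.

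Next I would exhibit infinitely many zeros of $d_k$. Up to replacing $u,v$ by periodic points starting with a common letter $a$, the prefixes $\sigma_1^n(a)$ of $u$ and $\sigma_2^n(a)$ of $v$ share both length and abelianization $M^n e_a$, so $d_{|\sigma_1^n(a)|}=0$ for every $n$. Enumerating $Z:=\{k\ge 0 : d_k=0\}=\{k_0<k_1<k_2<\cdots\}$ and setting
\[
B_i := \begin{pmatrix}u_{k_i}\cdots u_{k_{i+1}-1}\\ v_{k_i}\cdots v_{k_{i+1}-1}\end{pmatrix},
\]
each $B_i$ is a balanced block, and minimality follows from the fact that the open interval $(k_i,k_{i+1})$ contains no element of $Z$.

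To finish, I would prove that only finitely many distinct $B_i$ appear. Each $B_i$ corresponds to a walk $0\to w_1\to\cdots\to w_{m-1}\to 0$ in $F$ that avoids $0$ at intermediate times, with steps $e_{u_{k_i+j}}-e_{v_{k_i+j}}$. Bounding the length $m$ of such walks suffices, because then $B_i$ ranges over a finite subset of $\mathcal{A}^\ast\times\mathcal{A}^\ast$. I would derive this bound by combining the finiteness of $F$ with the hierarchical self-similarity of $\sigma_1$ and $\sigma_2$: a very long excursion of $d_k$ away from $0$ would have to be realised inside the images $\sigma_1^n(w),\sigma_2^n(w)$ at arbitrarily high levels, and the common column sums of $M$ then force an intermediate zero of $d_k$ within the excursion, yielding a contradiction with minimality.

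\textbf{Main obstacle.} The decisive difficulty is controlling the gaps in $Z$. Uniform recurrence of $u$ and $v$ individually does not immediately transfer to the pair $(u,v)$, so ruling out arbitrarily long intervals on which $d_k$ avoids $0$ genuinely requires the common incidence matrix together with the substitutive hierarchy of the two systems; this is where the proof is most sensitive and where a careful combinatorial or dynamical argument on the product subshift will be needed.
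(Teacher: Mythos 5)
Your setup is sound and in some ways cleaner than the paper's: cutting the pair $(u,v)$ at the zeros of $d_k$ is exactly the right way to produce minimal balanced blocks, and the observation that $d_k$ takes values in a finite subset $F$ of $\Gamma$ is correct. But the decisive step --- showing that consecutive zeros of $d_k$ occur with bounded gaps, so that only finitely many distinct blocks arise --- is precisely the step you leave unproven, and the mechanism you sketch for it does not work. Equality of the incidence matrices gives $l(\sigma_1(w))=l(\sigma_2(w))=Ml(w)$, which produces zeros of $d_k$ at the images of existing zeros (e.g.\ at the endpoints of $\sigma_1^n(a)$ and $\sigma_2^n(a)$); it does not force any intermediate zero inside a long excursion. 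Moreover the hypothesis that $0$ is an interior point of $X_{\sigma_1}$ is essential and appears nowhere in your argument: for the pair $\chi_1: a\mapsto aab,\ b\mapsto ab$ and $\chi_2: a\mapsto baa,\ b\mapsto ba$ discussed at the end of the paper, the fixed points satisfy $u_1=a.u$ and $u_2=b.u$, so $d_k=e_a-e_b\neq 0$ for every $k\geq 1$ and no balanced decomposition exists at all, even though the incidence matrices coincide. Any argument that never invokes the interior-point hypothesis must fail on this example; your reduction ``up to replacing $u,v$ by periodic points starting with a common letter'' is likewise not available in general, as the same example shows.

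The paper closes this gap by a different, dynamical route, which is where its preceding lemmas are consumed: since $0$ is interior to $X_{\sigma_1}$, the central tile is a fundamental domain for the projection of $\Gamma$ (Lemma 3.2), and a first return time to a small open set $W$ inside the intersection for the $\sigma_2$-system is also a return time for the $\sigma_1$-system (Lemma 3.3, which uses Proposition 3.1 to upgrade coincidence of projections to coincidence of points of the two stepped lines). Minimality (uniform recurrence) of each substitution dynamical system makes the return times to $W$ syndetic, hence bounded, and this bound is what controls the length of a minimal balanced block and therefore the number of distinct blocks. To repair your proof you would need to import this return-time argument (or an equivalent use of the fundamental-domain property) to bound the gaps in your set $Z$; the purely combinatorial self-similarity argument you propose is not sufficient on its own.
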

\begin{proof}
Let $u$ and $v$ be tow fixed points of $\sigma_1$ and $\sigma_2$ respectively.
We have  $0\in X_\sigma$  then there exist $v_1$ and $v_2$ two prefix of $u$
and $v$ respectively such that $x = \pi(v_1) = \pi(v_2)$ and $l(v_1) = l(v_2)$.
We obtain a balanced block:  $\begin{pmatrix}
v_1\\
v_2\\
\end{pmatrix}$, we can decompose it with minimal balanced blocks and we consider the image of each new minimal
balanced block with $\sigma_1$ and $\sigma_2$.
 Then  there exist new minimal balanced blocks which  appear, we consider the image of each new blocks by
 $\sigma_1$ and $\sigma_2$. Since every word  appears with a bounded distance, all the minimal balanced blocks will appear after  a finite time. Then we can
obtain a decomposition of $u$ and $v$ with a finite number of minimal balanced
blocks. A simple case appears when $u$ and $v$ begin with the same letter $i$, 
then the first minimal  balanced block is $\begin{pmatrix}
i\\
i\\
\end{pmatrix}$.

\end{proof}

\begin{theoreme}

$X_{\sigma}$  is a substitutive set.
\end{theoreme}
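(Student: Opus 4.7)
The plan is to construct explicitly a primitive substitution $\tau$ on a finite alphabet $\mathcal{B}$ of minimal balanced blocks (produced by Lemma 3.4) and then to identify $X_\sigma$ with the closure of the projection of a canonical stepped line of a $\tau$-fixed point, matching Definition 1.1. Let $\mathcal{B}=\{B_1,\ldots,B_r\}$ be the finite set of minimal balanced blocks $B_j=\binom{U_j}{V_j}$ along which the fixed points $u$ of $\sigma_1$ and $v$ of $\sigma_2$ decompose simultaneously. Since $l(\sigma_1(U_j))=M\,l(U_j)=M\,l(V_j)=l(\sigma_2(V_j))$, the pair $\binom{\sigma_1(U_j)}{\sigma_2(V_j)}$ is itself balanced; reading it from left to right produces a unique concatenation of minimal blocks from $\mathcal{B}$, which I take as the definition of $\tau(B_j)$. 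The word $w$ over $\mathcal{B}$ recording the minimal-block decomposition of the pair $(u,v)$ is then a fixed point of $\tau$ by construction.

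Next I would verify primitivity of $\tau$ and set up the appropriate projection. Uniform recurrence of $u$ and $v$ — which already underlies the finiteness of $\mathcal{B}$ in Lemma 3.4 — forces every $B_j$ to occur in some iterate $\tau^N(B_i)$, so the incidence matrix $N$ of $\tau$ is primitive. The linear map $\Lambda\colon\mathbb{R}^r\to\mathbb{R}^d$ defined by $\Lambda(e_j)=l(U_j)=l(V_j)$ intertwines the two matrices: $M\Lambda=\Lambda N$. Therefore $\pi_s\circ\Lambda$ projects the canonical stepped line of $w$ onto the contracting subspace of $M$ restricted to $\Lambda(\mathbb{R}^r)$, which is exactly the ``contracting space associated to the restriction of a positive integer matrix'' prescribed by Definition 1.1. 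By construction its vertices are the points $\pi_s(l(p))$ where $p$ is a prefix-abelianization common to $u$ and $v$; Proposition 3.1 guarantees that these coincidences occur exactly at the balanced-block cuts, so each projected vertex lies in $X_{\sigma_1}\cap X_{\sigma_2}$. Taking closures yields one inclusion $\overline{\pi_s\,\Lambda(\text{stepped line of }w)}\subseteq X_\sigma$.

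The main obstacle is the reverse inclusion, where the hypothesis ``$0$ is interior to $X_{\sigma_1}$'' is decisive. Lemma 3.1 supplies a non-empty open set $W\subset X_\sigma$; setting $V_i=\pi_i^{-1}(W)$ in $\Omega_{\sigma_i}$, Lemma 3.3 identifies return times into $V_2$ with return times into $V_1$, so on a common lift of $W$ the induced dynamics is precisely coded by $\tau$. Combining this with the continuity of $\pi_1,\pi_2$ (Proposition 2.1), the uniform recurrence of $w$, and the GIFS self-similarity of Theorem 2.1 applied to $\tau$, I would propagate density of the projected $\tau$-vertices first throughout $W$ and then, via the self-similar structure iterated from any interior point, throughout $X_\sigma$. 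This yields the required identification and shows that $X_\sigma$ is substitutive in the sense of Definition 1.1.
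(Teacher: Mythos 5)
Your proposal follows essentially the same route as the paper: build a substitution on the finite alphabet of minimal balanced blocks by applying $\sigma_1$ and $\sigma_2$ to the two rows of each block and re-decomposing the resulting balanced pair, then realize $X_\sigma$ as the closure of the projection of the fixed point of this block substitution. In fact you supply several details the paper leaves implicit --- the intertwining relation $M\Lambda=\Lambda N$, the primitivity of $\tau$, and the separation of the identification into two inclusions with the reverse one handled via Lemmas 3.1 and 3.3 --- whereas the paper simply asserts that the algorithm produces the morphism generating all common points.
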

\begin{proof}
We have $X_\sigma$ is the closure of the projection of points associated to
balanced blocks, from the two stepped lines associated to the fixed points of
$\sigma_1$ and $\sigma_2$. These common points can be obtained as a fixed point
of a new substitution defined on the set of the minimal balanced blocks. There
exist an algorithm to obtain this morphism (or substitution). Since $0$ is a point from $X_\sigma$ there exist two
minimal initial word $v_1$ and $v_2$ from the language of $\sigma_1$ and
$\sigma_2$ respectively  such that $l(v_1) = l(v_2)$

We denote the block  $\begin{pmatrix}
v_1\\
v_2\\
\end{pmatrix}$ and we consider $\sigma_1(v_1)$ and $\sigma_2(v_2)$ we obtain a second block  $\begin{pmatrix}
\sigma_1(v_1)\\
\sigma_2(v_2)\\
\end{pmatrix}$ with the property $l(\sigma_1(v_1))=l(\sigma_2(v_2))$ because $\sigma_1$ and $\sigma_2$ have
the same matrix. These blocks have a finite length, because the return time in $X_\sigma$ is bounded.
We consider the decomposition of this balanced  block $\begin{pmatrix}
\sigma_1(v_1)\\
\sigma_2(v_2)\\
\end{pmatrix}$  with minimal balanced  blocks.\\
This mean we can write $\begin{pmatrix}
\sigma_1(v_1)\\
\sigma_2(v_2)\\
\end{pmatrix} = \begin{pmatrix}
u_1\ldots u_k\\
w_1\ldots w_k\\
\end{pmatrix}$ with the property $l(u_1) = l(w_1)$, \ldots ,$l(u_n) = l(v_n)$.\\

With this method we obtain a finite numbers of blocks with the same
abelianization. We consider this set of blocks and we consider the image of
each block with the two substitutions $\sigma_1$ and $\sigma_2$ and we obtain a
morphism witch generate all the common points of the stepped lines.

\end{proof}

\section{Examples}
\subsection{Algorithm to obtain the morphism of the common points of two Rauzy fractals }

\subsubsection{Example 1}

 I will take the example of $\tau_1$ and $\tau_2$ to show how the algorithm is working. In this example the first minimal balanced block that we consider is the beginning of the two fixed points associated to $\tau_1$ and $\tau_2$ it will be $\begin{pmatrix}
a\\
a\\
\end{pmatrix}$.\\
And we consider the image of the first element of this block by $\tau_1$ and
the second one by $\tau_2$ so we obtain : $\begin{pmatrix}
a\\
a\\
\end{pmatrix}\overset{\tau_1, \tau_2}\longrightarrow\begin{pmatrix}
aba\\
aab\\
\end{pmatrix}$.\\
We denote by $A$ the minimal  balanced  block $\begin{pmatrix}
a\\
a\\
\end{pmatrix}$ and by $B$ the minimal balanced block $\begin{pmatrix}
ba\\
ab\\
\end{pmatrix}$.\\
So we obtain $A\rightarrow AB.$\\
The second step is to consider the same thing with the new block
$\begin{pmatrix}
ba\\
ab\\
\end{pmatrix}$.\\
We consider the image of this block with the two substitution $\tau_1$ and
$\tau_2$, and we obtain :
\begin{center}
$\begin{pmatrix}
ba\\
ab\\
\end{pmatrix}\overset{\tau_1, \tau_2}\longrightarrow\begin{pmatrix}
ababa\\
aabba\\
\end{pmatrix}$.\\
\end{center}
We obtain an other block $\begin{pmatrix}
b\\
b\\
\end{pmatrix}$
and we denote by $C$ the projection over this new block and we obtain the image
of $B$ is $ABCA$.
We continuous with this algorithm and we obtain the image of the block
$\begin{pmatrix}
b\\
b\\
\end{pmatrix}$ is the new block $\begin{pmatrix}
ab\\
ba\\
\end{pmatrix}$.
So we obtain the image of the letter $C$ is a new letter $D$.
Finally the image of the letter $D$ is $DAAC$.
So, we obtain an alphabet  $\mathcal{B}$ in $4$ letters and we can define the morphism $\phi$ as :

\begin{center}
$\phi:\left\{
\begin{array}{ll}
A\rightarrow AB\\
B\rightarrow ABCA\\
C\rightarrow D\\
D\rightarrow DAAC\\
\end{array}\right. $
\end{center}
And we consider the projection $\pi$ of the letters $A, B, C, D$ in the sets of
blocks $\begin{pmatrix}
a\\
a\\
\end{pmatrix}$,
$\begin{pmatrix}
ba\\
ab\\
\end{pmatrix}$, $\begin{pmatrix}
b\\
b\\
\end{pmatrix}$ et
$\begin{pmatrix}
ab\\
ba\\
\end{pmatrix}$\\
Then we have : $\begin{pmatrix}
\tau_1^n(a)\\
\tau_2^n(a)\\
\end{pmatrix}$
$ =  \pi(\phi^n(A))$.\\
The morphism $\phi$ generate all the common points of the two Rauzy fractals
associated to  $\tau_1$ and $\tau_2$.

\begin{figure}[h]
\begin{center}
\scalebox{0.45}{\includegraphics{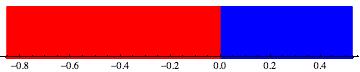}}
\scalebox{0.45}{\includegraphics{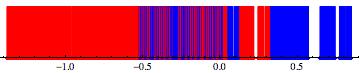}}
\caption{ The Rauzy fractals of $\tau_1$ and $\tau_2$.}
\end{center}
\end{figure}
\begin{figure}[h]
\begin{center}
\scalebox{0.45}{\includegraphics{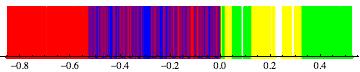}}
\caption{ Common points of  $\tau_1$ and $\tau_2$ with distinction of  blocs defined with $\phi$}
\end{center}
\end{figure}

\subsubsection{Example 2}
For the two substitutions of Tribonacci and the flipped Tribonacci it is more
complicated see Figure[7], we can define the morphism $\phi$ which generate all
the common points as follows:

\begin{center}
$\phi:\left\{
\begin{array}{ll}
A\rightarrow AB\\
B\rightarrow C\\
C\rightarrow AD\\
D\rightarrow AE\\
E\rightarrow F\\
F\rightarrow ADDGA\\
G\rightarrow AH\\
H\rightarrow ID\\
I\rightarrow ADJ\\
J\rightarrow AHK\\
K\rightarrow IDGA
\end{array}\right.$
\end{center}
and the projection map $\pi$ :

\begin{center}
$\pi:\left\{
\begin{array}{ll}
A\rightarrow \begin{pmatrix}
a\\
a\\
\end{pmatrix}\\
B\rightarrow \begin{pmatrix}
b\\
b\\
\end{pmatrix}\\
C\rightarrow \begin{pmatrix}
ac\\
ca\\
\end{pmatrix}\\
D\rightarrow \begin{pmatrix}
ba\\
ab\\
\end{pmatrix}\\
E\rightarrow \begin{pmatrix}
cab\\
bca\\
\end{pmatrix}\\
F\rightarrow \begin{pmatrix}
aabac\\
caaab\\
\end{pmatrix}\\
G\rightarrow \begin{pmatrix}
cab\\
abc\\
\end{pmatrix}\\
H\rightarrow\begin{pmatrix}
abac\\
bcaa\\
\end{pmatrix}\\
I\rightarrow \begin{pmatrix}
abaca\\
caaab\\
\end{pmatrix}\\
J\rightarrow\begin{pmatrix}
cabaab\\
ababca\\
\end{pmatrix}\\
K\rightarrow \begin{pmatrix}
ababac\\
bcaaab\\
\end{pmatrix}\end{array}\right.$

\end{center}

\begin{figure}[h]
\begin{center}
\scalebox{0.8}{\includegraphics{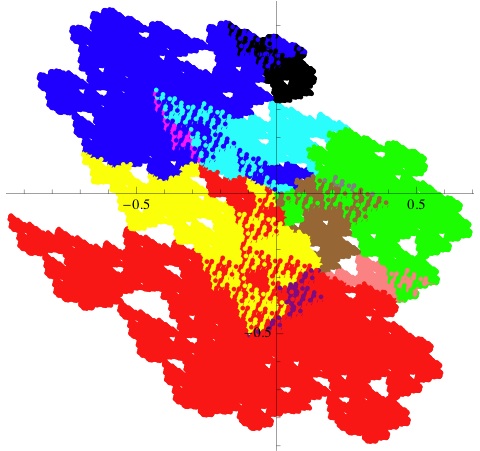}}
\caption{Sets of common points of the Tribonacci substitution and the flipped substitution. Each color stands for a
different letter of $\mathcal{B}$ and shows the dynamics of the morphism $\phi$.}

\end{center}
\end{figure}
\begin{corollaire}

Let $\sigma_1$ and $\sigma_2$ be the two substitution Tribonacci and the
flipped Tribonacci defined as follows:

\begin{center}
$\sigma_1:\left\{
\begin{array}{ll}
a\rightarrow ab\\
b\rightarrow ac\\
c\rightarrow a
\end{array}\right.$
\hspace{1cm}et\hspace{1cm} $\sigma_2:\left\{
\begin{array}{ll}
a\rightarrow ab\\
b\rightarrow ca\\
c\rightarrow a
\end{array}\right.$
\end{center}
\vspace{1cm}
 We consider $U$ and $V$ their two fixed points, then
the letter $c$ doesn't occur in the same position in $U$ and $V$.
\end{corollaire}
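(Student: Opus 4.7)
The plan is to leverage the explicit morphism $\phi$ and projection $\pi$ exhibited in Example 2, which by Theorem 3.3 generate the common structure of $U$ and $V$: reading the top rows of $\pi(\phi^n(A))$ reproduces the prefix $\sigma_1^n(a)$ of $U$, and reading the bottom rows reproduces the prefix $\sigma_2^n(a)$ of $V$. Crucially, because each $\pi(X)$ is a balanced block (so the two rows share the same length), the position indices of $U$ and $V$ are partitioned into identical intervals, one per letter of the fixed point of $\phi$.

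Given this, the question reduces to a finite combinatorial check. First I would observe that it suffices to verify, for each of the eleven minimal balanced blocks $A,B,\ldots,K$ listed in Example 2, that there is no column in which both entries equal $c$. Second, I would inspect the eleven blocks directly: in $A=(a,a)$, $B=(b,b)$ and $D=(ba,ab)$ the letter $c$ is absent; in each of $C, E, F, G, H, I, J, K$ a position-by-position comparison shows that the two occurrences of $c$ always sit in distinct columns (for instance in $C=(ac,ca)$ the $c$'s are at positions $2$ and $1$, in $G=(cab,abc)$ at positions $1$ and $3$, in $F=(aabac,caaab)$ at positions $5$ and $1$, and so on for the remaining blocks). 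Concatenating these locally $c$-disjoint blocks according to the structure dictated by $\phi$ yields the claim for the full words $U$ and $V$.

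The main obstacle, if there is one, is bookkeeping rather than mathematics: one must be confident that the eleven blocks in Example 2 really do exhaust the minimal balanced blocks arising in the algorithm — so that no further coincidences escape the inspection — and that the alignment of the two rows of $\pi(\phi^n(A))$ with the two fixed points is correct. Both points are already guaranteed by the construction carried out in the proof of Theorem 3.3 (minimal balanced blocks appear with bounded gaps, so their total number is finite and the algorithm terminates on the listed alphabet). Beyond this, the corollary follows by the finite inspection above, with no further computation required.
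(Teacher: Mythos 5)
Your proposal is correct and follows essentially the same route as the paper: decompose $U$ and $V$ into the finitely many minimal balanced blocks produced by the algorithm of Example 2, observe that in no block does $c$ occupy the same column in both rows, and conclude by concatenation. The paper's own proof states exactly this (though more tersely, without the explicit column-by-column inspection you supply), so your argument is a faithful, slightly more detailed version of it.
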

\begin{proof}
Minimal balanced blocks represents a decomposition of the two fixed
points $U $and $V $. We remark  that in these finite minimal  blocks there is no $c$ which appears
in the same position. One can then deduce that the letter $c$ does not appear
in the same position in two fixed points $U$ and $V$.
\end{proof}

\subsubsection{Example 3}

Now we will consider more general example defined as follows  :
\begin{center}
$\delta_i^1:\left\{
\begin{array}{ll}
a\rightarrow a^ib\\
b\rightarrow a^{i-1}c\\
c\rightarrow a
\end{array}\right.$
\hspace{1cm} and \hspace{1cm} $\delta_i^2:\left\{
\begin{array}{ll}
a\rightarrow aba^{i-1}\\
b\rightarrow aca^{i-2}\\
c\rightarrow a
\end{array}\right.$
\end{center}
$\delta_i^1$ and $\delta_i^2$ have the same incidence matrix. We can define the morphism of their common points for
all $i\geq 3$ as :
\begin{center}
\begin{figure}[h]
\begin{center}
\scalebox{0.45}{\includegraphics{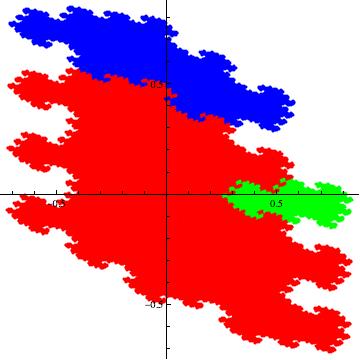}}
\scalebox{0.45}{\includegraphics{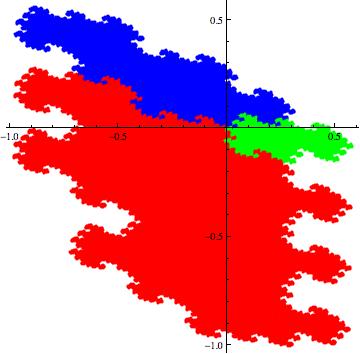}}
\caption{ The Rauzy fractals of $\delta_3^1$ and $\delta_3^2$.}
\end{center}
\end{figure}

$\phi_i:\left\{
\begin{array}{ll}
A\rightarrow AB\\
B\rightarrow AC\\
C\rightarrow (AAD)^{i-1}[AAE(AAD)^i]^{i-2}AAE(AAD)^{i-1}A\\
D\rightarrow AF\\
E\rightarrow (AAD)^{i-3}A\\
F\rightarrow (AAD)^{i-1}[AAE(AAD)^i]^{i-3}AAE(AAD)^{i-1}A.
\end{array}\right.$

$\pi_i:\left\{
\begin{array}{ll}
A\rightarrow \begin{pmatrix}
a\\
a\\
\end{pmatrix}\\
B\rightarrow \begin{pmatrix}
a^{i-1}b\\
ba^{i-1}\\
\end{pmatrix}\\
C\rightarrow \begin{pmatrix}
a^{i-1}b(a^{i}b)^{i-2}a^{i-1}c\\
ca^{i-1}(ba^{i})^{i-2}ba^{i-1}\\
\end{pmatrix}\\
D\rightarrow \begin{pmatrix}
a^{i-2}b\\
ba^{i-2}\\
\end{pmatrix}\\
E\rightarrow \begin{pmatrix}
a^{i-3}c\\
ca^{i-3}\\
\end{pmatrix}\\
F\rightarrow \begin{pmatrix}
a^{i-1}b(a^{i}b)^{i-3}a^{i-1}c\\
ca^{i-1}(ba^i)^{i-3}ba^{i-1}\\
\end{pmatrix}\\
\end{array}\right.$
\end{center}
\begin{figure}[h]
\begin{center}
\scalebox{0.5}{\includegraphics{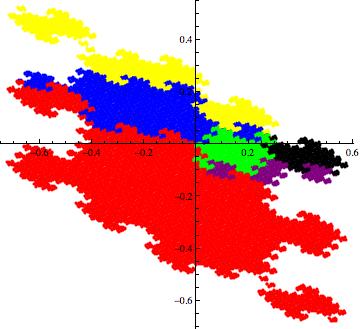}}
\caption{Sets of common points of $\delta_3^1$ and $\delta_3^2$.}

\end{center}
\end{figure}

\textbf{Remark.} The property $0$  is inner point is sufficient, and we have
this example of substitutions with the same incidence matrix but the
intersection is reduced to the origin.\\

We can give an example where the intersection is empty. We consider the two
substitutions $\chi_1$ and $\chi_2$ defined as follows :

 \begin{center}
$\chi_1:\left\{
\begin{array}{ll}
a\rightarrow aab\\
b\rightarrow ab\\
\end{array}\right.$
\hspace{1cm} and \hspace{1cm} $\chi_2:\left\{
\begin{array}{ll}
a\rightarrow baa\\
b\rightarrow ba\\
\end{array}\right.$
\end{center}

\begin{figure}[h]
\begin{center}
\scalebox{0.35}{\includegraphics{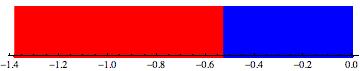}}
\scalebox{0.35}{\includegraphics{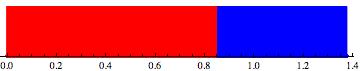}}
\caption{ The Rauzy fractals of $\chi_1$ and $\chi_2$.}
\end{center}
\end{figure}

\begin{proof}
We consider $u_1$ and $u_2$ the two fixed points associated to $\chi_1$ and
$\chi_2$ respectively.
If $a.x$ is a prefix of $u_1$ then $b.x$ is a prefix of $u_2$.\\
We will reason by induction :
for $x=a$ it is so verified for $n=1$.\\
We suppose now that $a.x$ is prefix of $u_1$ and $b.x$ is a prefix of $u_2$
with $|x|=n$.\\
$\chi_1(a.x)=aab\chi_1(x)$ is prefix of $u_1$,
$\chi_2(b.x).b=ba\chi_2(x)b$ is a prefix of $u_2$ if and only if
$\chi_1(x)=\chi_2(x)b$.\\
We have for the two letter $a$ and $b$:\\
\begin{itemize}
  \item $x=a$ : $b.\chi_1(a)=baab=\chi_2(a)b.$
  \item $x=b$ : $b.\chi_1(b)=bab=\chi_2(b)b.$
\end{itemize}
We consider now $x=x_1x_2\ldots x_n$ with $x_i\in\{a, b\}$\\
$b.\chi_1(x)=b.\chi_1(x_1x_2\ldots x_n)=b.\chi_1(x_1)\ldots\chi_1(x_n)$\\
\hspace{1.2cm}$=\chi_2(x_1).b.\chi_1(x_2)\ldots\chi_1(x_n)$\\
\hspace{1.2cm}\vdots\\
\hspace{1.2cm} $= \chi_2(x_1)\chi_2(x_2)\ldots\chi_2(x_n).b$\\
So we prove that there exist an infinite word $u$ such that $u_1=a.u$ and
$u_2=b.u$.
\end{proof}

\end{document}